\documentclass[11pt, oneside]{amsart}   	% use "amsart" instead of "article" for AMSLaTeX format
\usepackage[bottom=47mm]{geometry}                		% See geometry.pdf to learn the layout options. There are lots.
\usepackage{graphicx}				% Use pdf, png, jpg, or eps? with pdflatex; use eps in DVI mode
\usepackage{amssymb}
\usepackage{xcolor}
\usepackage{pgfplots,tikz}
\usetikzlibrary{patterns, shapes.geometric, intersections, calc}
\definecolor{imayou}{RGB}{154, 154, 235}
\definecolor{usuai}{RGB}{9, 150, 126}
\definecolor{persred}{RGB}{154,63,63}
\definecolor{sand}{RGB}{201, 177, 60}
\pgfplotsset{compat=1.18}
\usepackage{hyperref}
\usepackage{amsmath}
\usepackage{amsthm}
\usepackage{mathrsfs}
\usepackage{mathtools}
\usepackage[title]{appendix}
\mathtoolsset{showonlyrefs=true}
\usepackage{xcolor}
\usepackage{bbm}
\usepackage{enumerate, comment} 
\usepackage{ulem}
\usepackage[foot]{amsaddr}

\title[Superellipse damping]{Optimal decay for waves damped by superellipses}

\author{B. Achammer$^{1}$}
\author{Perry Kleinhenz$^{1,*}$}
\address[1]{Illinois State University, Mathematics Department}
\address[*]{Corresponding author, email: pbklein@ilstu.edu}
\theoremstyle{definition}
\newtheorem{assumption}{Assumption}
\theoremstyle{theorem}
\newtheorem{theorem}{Theorem}[section]
\newtheorem{lemma}[theorem]{Lemma}

\newtheorem{proposition}[theorem]{Proposition}
\theoremstyle{definition}

\newtheorem{remark}[theorem]{Remark}
\newtheorem{example}[theorem]{Example}
\numberwithin{equation}{section}

\newcommand{\Rb}{\mathbb{R}}

\newcommand{\C}{\mathbb{C}}

\newcommand{\Nb}{\mathbb{N}}
\newcommand{\Tb}{\mathbb{T}}

\newcommand{\ra}{\rightarrow}

\newcommand{\e}{\varepsilon}
\renewcommand{\d}{\delta}

\newcommand{\nm}[1]{\left| \left| #1 \right| \right|}

\newcommand{\lp}[2]{ \nm{#1}_{L^{#2}}}

\newcommand{\hp}[2]{\nm{#1}_{H^{#2}}}

\newcommand{\ltwo}[1]{\lp{#1}{2}}

\newcommand{\vphi}{\varphi}

\newcommand{\p}{\partial}

\newcommand{\supp}{\text{supp }}

\newcommand{\Lc}{\mathcal{L}}

\newcommand{\ti}{\widetilde}

\newcommand{\Sb}{\mathbb{S}}

\renewcommand{\Re}{\text{Re }}
\renewcommand{\Im}{\text{Im }}

\newcommand{\ltworp}[1]{\|#1\|_{L^2(\Rb_+)}}

\begin{document}
	\begin{abstract}
		Energy decay rates for solutions of the damped wave equation on the torus are known to be influenced by the geometry of the damped set and the growth properties of the damping. In this paper we produce lower bounds on energy decay rates for a class of damping which are positive on a superellipse and grow polynomially like the distance to the boundary of the superellipse. The energy decay rates we obtain depend explicitly on the exponent used to define the superellipse and the polynomial power. We show these rates are sometimes optimal. The proof adapts quasimodes from $y$-invariant damping using a simplification of the usual normal form argument.\\
		\smallskip
		%\noindent \textbf{Keywords.} damped wave equation, energy decay, damping geometry
	\end{abstract}
	\maketitle
    \vspace{-1cm}
	\section{Introduction}
	Let $W$ be a bounded nonnegative function which is not identically zero, on the flat two torus $\Tb^2 = [-\pi,\pi)^2/\sim$, and let $u$ solve the damped wave equation 
	\begin{equation}\label{eq:DWE}
		\begin{cases}
			(\p_t^2 -\Delta + W \p_t )u = 0, & (z,t) \in \Tb^2 \times \Rb \\
			(u,\p_t u)|_{t=0 }= (u_0,u_1) \in H^2(\Tb^2) \times H^1(\Tb^2).
		\end{cases}
	\end{equation}
	In this paper we show how the geometry of $\{W>0\}$ and the growth rate of $W$ determine polynomial energy decay of the form 
	\begin{equation}
		E(u,t):=\frac{1}{2}\int_{\Tb^2} |\nabla u(z,t)|^2 + |\p_t u(z,t)|^2 dz \leq C t^{-2\alpha} \left( \hp{u_0}{2}^2 + \hp{u_1}{1}^2 \right). \label{eq:stable}
	\end{equation}
	For $r_0 \in (0,\pi), n \geq 1$, we will consider damping positive on the superellipse, see Figure \ref{f:superellipse}
    \begin{equation}
        E_n=\{(x,y)\in\Tb^2: |x|^n +|y|^n < r_0^n\}.
    \end{equation}
    \vspace{-.5cm}
    \begin{figure}[h]
        \centering
             \begin{tikzpicture} 
  \draw[thick] (-1.5,-1.5) rectangle (1.5,1.5); \draw[sand, pattern=north east lines, pattern color=sand, domain=0:pi/2] plot ({cos(\x r)^(2/10)},{sin(\x r)^(2/10)}) -- plot ({-sin(\x r)^(2/10)},{cos(\x r)^(2/10)}) -- plot ({-cos(\x r)^(2/10)},{-sin(\x r)^(2/10)}) -- plot ({sin(\x r)^(2/10)},{-cos(\x r)^(2/10)}) -- cycle; 
   \end{tikzpicture}
    \hspace{.4cm}
    \begin{tikzpicture} 
  \draw[thick] (-1.5,-1.5) rectangle (1.5,1.5); \draw[sand, pattern=north east lines, pattern color=sand, domain=0:pi/2] plot ({cos(\x r)^(2/4)},{sin(\x r)^(2/4)}) -- plot ({-sin(\x r)^(2/4)},{cos(\x r)^(2/4)}) -- plot ({-cos(\x r)^(2/4)},{-sin(\x r)^(2/4)}) -- plot ({sin(\x r)^(2/4)},{-cos(\x r)^(2/4)}) -- cycle; 
   \end{tikzpicture}
          \hspace{.4cm}
    \begin{tikzpicture} 
  \draw[thick] (-1.5,-1.5) rectangle (1.5,1.5); \draw[sand, pattern=north east lines, pattern color=sand, domain=0:pi/2] plot ({cos(\x r)^(1)},{sin(\x r)^(1)}) -- plot ({-sin(\x r)^(1)},{cos(\x r)^(1)}) -- plot ({-cos(\x r)^(1)},{-sin(\x r)^(1)}) -- plot ({sin(\x r)^(1)},{-cos(\x r)^(1)}) -- cycle; 
   \end{tikzpicture}
    \hspace{.3cm}
    \begin{tikzpicture} 
  \draw[thick] (-1.5,-1.5) rectangle (1.5,1.5); \draw[sand, pattern=north east lines, pattern color=sand, domain=0:pi/2] plot ({cos(\x r)^(2/1.5)},{sin(\x r)^(2/1.5)}) -- plot ({-sin(\x r)^(2/1.5)},{cos(\x r)^(2/1.5)}) -- plot ({-cos(\x r)^(2/1.5)},{-sin(\x r)^(2/1.5)}) -- plot ({sin(\x r)^(2/1.5)},{-cos(\x r)^(2/1.5)}) -- cycle; 
   \end{tikzpicture}
        \caption{The superellipse $E_n$ for $n=10, 4, 2,$ and $ 1.5$.}
        \label{f:superellipse}
    \end{figure}

    Before stating our main result we state a preliminary assumption.
        \begin{assumption}\label{a:old}
        Assume $W$ is nonnegative, not identically zero, $W \in W^{9,\infty}(\Tb^2)$ and that there exists $C>0$ such that 
        \begin{equation}
            |\nabla W| \leq C |W|^{\frac{3}{4}},\quad  |\nabla^2 W| \leq C |W|^{\frac{1}{2}}.
        \end{equation}
    \end{assumption}
    \begin{theorem}\label{thm:first}
        For $\beta \geq 4, n \geq 2$, there exists $C>0$, $W$ nonnegative and bounded with 
        \begin{equation}\label{eq:distComp}
            \frac{1}{C} d((x,y), \Tb^2 \backslash E_n)^{\beta} \leq W(x,y) \leq C d((x,y), \Tb^2 \backslash E_n)^{\beta}, 
        \end{equation}
         such that \eqref{eq:stable} cannot hold with 
        \begin{equation}
            \alpha = 1- \frac{1}{\beta+\frac{1}{n}+3} +\d,
        \end{equation}
        for any $\d>0$. 
        
        Furthermore, if $\beta \geq 9$ and $n=2,4,6,8$ or $n \geq 9$, then this same $W$ also satisfies Assumption \ref{a:old}, and \eqref{eq:stable} holds with 
        \begin{equation}
            \alpha =1- \frac{1}{\beta+\frac{1}{n}+3}.
        \end{equation}
    \end{theorem}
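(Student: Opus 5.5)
The plan is to treat the two halves of Theorem \ref{thm:first} separately: a quasimode construction for the lower bound, and an appeal to an existing upper-bound theorem for the matching rate. Both rest on the Borichev--Tomilov correspondence. Polynomial decay \eqref{eq:stable} with exponent $\alpha$ is equivalent to the resolvent bound
\begin{equation}
\nm{(-\Delta - \lambda^2 + i\lambda W)^{-1}}_{L^2\to L^2} \leq C \lambda^{\frac{1}{\alpha}-1}, \qquad \lambda \geq 1 .
\end{equation}

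\textbf{Choice of $W$.} I take the explicit profile
\begin{equation}
W(x,y) = \chi(x,y)\,\big(r_0^n - |x|^n - |y|^n\big)_+^{\beta},
\end{equation}
smoothed only away from $\p E_n$. Since $n\geq 2$, the function $r_0^n-|x|^n-|y|^n$ is comparable to $d((x,y),\Tb^2\backslash E_n)$ near the boundary, which gives \eqref{eq:distComp}.

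\textbf{Lower bound.} The relevant geometry is near the point $(r_0,0)$. There the vertical geodesic $x=r_0$ is tangent to $\p E_n$ to order $n$, since $\p E_n$ is given by $x \approx r_0 - c\,|y|^n$. I would take the quasimodes known for $y$-invariant damping $(r_0-x)_+^{\beta}$. These have the form $e^{i\lambda y}v_\lambda(x)$, where $v_\lambda$ is a one-dimensional quasimode for
\begin{equation}
-\p_x^2 + i\lambda (r_0-x)_+^{\beta}
\end{equation}
concentrated at scale $\lambda^{-\frac{2}{\beta+2}}$ (or the appropriate scale) just outside/at the edge of $E_n$. For this damping they yield the rate $1-\frac{1}{\beta+3}$.

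The new ingredient is a cutoff $\psi(\lambda^{\gamma}y)$ in $y$. On its support the true damping differs from the $y$-invariant one: the edge moves by $O(\lambda^{-n\gamma})$, so $W$ differs by $O\big((\lambda^{-\frac{2}{\beta+2}}+\lambda^{-n\gamma})^{\beta}\big)$ relative to the model. Meanwhile the cutoff produces commutator errors $2i\lambda\cdot\lambda^{\gamma}\psi'$ and $\lambda^{2\gamma}\psi''$. The first is the dominant term, but it can be removed to leading order by a simplified normal form. Concretely, replace $\psi(\lambda^\gamma y)$ by a transport-corrected profile, or equivalently conjugate by $e^{i\lambda y}$ and solve the transport equation in $y$, leaving errors of size $\lambda^{2\gamma}$.

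One then balances three quantities: the damping perturbation, the cutoff error, and the spectral gap/imaginary part produced by the 1D quasimode. Choosing $\gamma$ optimally should yield a quasimode with
\begin{equation}
\nm{(-\Delta-\lambda^2+i\lambda W)u} \lesssim \lambda^{1-\frac{1}{\alpha_0}}\nm{u}, \qquad \alpha_0 = 1-\frac{1}{\beta+\frac1n+3}.
\end{equation}
The extra $\frac1n$ arises exactly from the $|y|^n$ bending of the boundary. By Borichev--Tomilov, this rules out every $\alpha_0+\d$. The condition $\beta\ge4$ is used to keep the normal-form remainder below the main error.

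\textbf{Upper bound.} For $\beta\geq 9$, I first verify Assumption \ref{a:old}. The bounds $|\nabla W|\lesssim W^{3/4}$ and $|\nabla^2W|\lesssim W^{1/2}$ follow from $W\approx\rho^\beta$, with $\rho = r_0^n-|x|^n-|y|^n$, provided $\rho$ is regular enough. This is where $n=2,4,6,8$ or $n\geq 9$ is needed: for those $n$, the function $|x|^n$ is $W^{9,\infty}$, and $W\in W^{9,\infty}$ since $\beta\ge 9$. I would then invoke the known upper-bound theorem for damping satisfying Assumption \ref{a:old}. That theorem gives decay in terms of the order of contact between undamped geodesics and $\{W>0\}$ together with the vanishing rate of $W$. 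Inserting contact order $n$ and vanishing order $\beta$ should reproduce exactly $\alpha_0$.

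\textbf{Main obstacle.} The hardest step is the lower-bound error bookkeeping. One must check that the transport/normal-form correction really eliminates the $\lambda^{1+\gamma}$ cutoff term. One must also check that the variation of $W$ in $y$ across the support, which is not small relative to $W$ itself, only costs the claimed power. I would first try to absorb that variation by letting the 1D profile $v_\lambda$ depend on $y$ through the shifted edge $r_0-c|y|^n$, and estimate the resulting $\p_y$ derivatives.
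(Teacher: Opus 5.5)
\textbf{Gap: the lower-bound quasimode cannot be localized in $y$.} Your proof of \eqref{eq:distComp} and your upper-bound half (verify Assumption~\ref{a:old} for $\beta\ge 9$ and the listed $n$, then quote \cite[Theorem 6.3]{Kleinhenz2025}) match the paper. The lower bound does not work as proposed.

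Your ansatz $e^{iky}v(x)$ with $k\approx\lambda$ lives on vertical geodesics. Almost all of its mass sits in the undamped strip $\{r_0<|x|\le\pi\}$, where $W\equiv 0$. Write $u=e^{iky}a$. The order-$\lambda$ part of $(-\Delta+i\lambda W-\lambda^2)u$ is $i\lambda e^{iky}(Wa-2\p_y a)$. On that strip you therefore need $\lambda\nm{\p_y a}\lesssim\lambda^{-1/(\beta+\frac1n+2)}\nm{a}$. A cutoff $\psi(\lambda^{\gamma}y)$ instead contributes an error of relative size $\lambda^{1+\gamma}$, and there is no damping there for a normal form to trade it against.

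More generally, the only $2\pi$-periodic amplitudes with $Wa-2\p_y a=c(x)a$ are $a=e^{g}b(x)$. These force $c=A(W)$ and $g=\frac12\int_{-\pi}^y (W-A(W))\,dy'$, and none of them is localized in $y$. Even your claimed residual $\lambda^{2\gamma}$ exceeds the target $\lambda^{-1/(\beta+\frac1n+2)}$ for every $\gamma>0$. Conceptually, an $o(\lambda)$ quasimode has a flow-invariant semiclassical measure, so it must spread over whole vertical circles rather than sit near the tangency point $(r_0,0)$. Starting from strip quasimodes for $(r_0-x)_+^{\beta}$ also builds the wrong exponent into the 1D problem. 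Their boundary layer has width $\lambda^{-1/(\beta+2)}$, not $\lambda^{-2/(\beta+2)}$, and they give only $\lambda^{1/(\beta+2)}$.

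\textbf{The missing idea: average along the whole vertical line.} The line $\{x\}\times\Tb$ meets $E_n$ in $|y|<(r_0^n-|x|^n)^{1/n}$, so $A(W)(x)\approx (r_0-|x|)^{\beta+\frac1n}$; this is where the $\frac1n$ comes from. The paper proceeds in three steps.
\begin{enumerate}
\item It inserts the weight $\psi(x)^{\beta+\frac1n}$ near $|x|=r_0$ so that $A(W)=(r_0-|x|)_+^{\beta+\frac1n}$ exactly (Lemma~\ref{l:avgdamp}).
\item It takes the $y$-delocalized quasimodes $u_h=e^{iky}v_h(x)$ of Proposition~\ref{p:oldquasi} for $A(W)$, with $\gamma=\beta+\frac1n$, $\rho_h=h^{1/(\gamma+2)}$ and error $O(h^2\rho_h)$.
\item It sets $w_h=e^{g}u_h$. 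Then $2h(D_ye^g)hD_yu_h+ihe^g(W-A(W))u_h=(1-hk)ihe^g(W-A(W))u_h=O(h^3)$.
\end{enumerate}
The remaining commutator terms are $O(h^2\rho_h)$ by Assumption~\ref{a:new} (this is where $\beta\ge 4$ enters), Lemma~\ref{l:avgDBC}, and $\ltwo{A(W)^{1/2}u_h}=O(h^{1/2}\rho_h^{1/2})$.

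The weight is not cosmetic. For your unweighted $W$ one gets $A(W)=c\,\psi(x)^{-(\beta+\frac1n)}(r_0-|x|)^{\beta+\frac1n}$, which has a non-constant smooth factor. Treating that factor as a perturbation of the exact-power 1D problem costs roughly $h\nm{(r_0-|x|)^{\gamma+1}v_h}\sim h^2\rho_h^{1/2}$, which is more than the $O(h^2\rho_h)$ you can afford. You would need either the weight or an additional correction term. Since $\psi$ is bounded above and below, the weight does not affect \eqref{eq:distComp}.
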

    \begin{remark}
        \begin{enumerate}
            \item Note that the sharp energy decay rate improves as $n$ decreases. This is despite the fact that the damping region becomes smaller as can be seen in Figure \ref{f:superellipse}. Note also that as $n \ra \infty$, $E_n$ approaches a square and the energy decay rate approaches $1-\frac{1}{\beta+3}$ which matches that of damping on a strip \cite{DatchevKleinhenz2020} or a square \cite[Theorem 6.1]{Kleinhenz2025}. 
            \item Note that the sharp energy decay rate improves as $\beta$ increases. This result provides further evidence for the idea that the geometry of the damped set and the growth rate of the damping determine the sharp energy decay rate for solutions of the damped wave equation.
            \item This is a generalization of the second part of \cite[Theorem 1.1]{Sun23}, which provided the same result but only when $\beta \geq 10,$ and $n=2$.
            \item Our result complements \cite{DKP25} and \cite{Kleinhenz2025} which show, in this case, that for any damping satisfying Assumption \ref{a:old} and \eqref{eq:distComp} with $\beta \geq 9, n>0$, then \eqref{eq:stable} holds with 
            \begin{equation}
                \alpha = 1- \frac{1}{\frac{\beta}{\min(1,n)}+\frac{1}{n}+3}.
            \end{equation}
            Our result shows that this $\alpha$ cannot be improved for $\beta \geq 9$ and $n=2,4,6,8,$ or $n \geq 9$.
        \end{enumerate}
    \end{remark}

    \subsection{Literature Review}
	Polynomial decay rates have been studied for damping on the square in \cite{LiuRao2005} and on partially rectangular domains (including tori) in \cite{BurqHitrik2007}.
	
	In the setting of \eqref{eq:DWE}, whenever $\{W>0\}$ is nonempty, \eqref{eq:stable} holds with $\alpha = \frac{1}{2}$: see  \cite[Theorem 2.3]{AL14} and~\cite{AM14, BurqZworski2019}. 
	On the other hand, if some geodesics do not intersect $\supp W$, then \eqref{eq:stable} does not hold for any $\alpha > 1$  \cite[Theorem~2.5]{AL14}.
	
	Making additional assumptions on $W$ refines this rate. If $W(x,y)=(|x|-r_0)_+^{\beta}$, $\beta>-1$, near $\{x = r_0\}$, then \eqref{eq:stable} holds with $\alpha=1 - \frac{1}{\beta+3}$, and there are solutions decaying no faster than this rate \cite{Kleinhenz2019, DatchevKleinhenz2020, KleinhenzWang2026}. 
	That is, for $y$-invariant damping supported on a strip the polynomial growth of the damping near $\partial \{W>0\}$ determines the sharp polynomial energy decay rate of solutions. See \cite{Stahn2017} and \cite[Appendix B]{AL14} for earlier work where $W$ is exactly the indicator function of a strip.
	
	If $W$ satisfies Assumption \ref{a:old}, $\{W>0\}$ is a locally strictly convex set with positive curvature, and $W(x,y)=d((x,y),\{W=0\})^{\beta}$, then the sharp energy decay rate is faster. In the energy decay rate, $\beta$ is replaced by $\beta+\frac{1}{2}$, so \eqref{eq:stable} holds with $\alpha=1 - \frac{1}{\beta+\frac{1}{2}+3}$, and there are solutions decaying no faster than this rate \cite{Sun23}. 
    
    This improvement to the energy decay rate was generalized to damping growing polynomial-logarithmically and extended to $\{W>0\}$ equal to a super-ellipse, which has zero curvature at some points \cite[Theorem 6.3]{Kleinhenz2025}. These improvements to energy decay rates were further generalized in \cite{DKP25} to allow for more relaxed and general geometric assumptions on $\{W>0\}$. Roughly speaking, the positive curvature assumption is replaced by the assumption that near undamped closed geodesics $\p \{W>0\}$ can be locally written as the graph of $y=|x|^n$. For $n > 0$, $\beta$ is replaced by $\frac{\beta}{\min(n,1)}+\frac{1}{n}$ so \eqref{eq:stable} holds with $\alpha = 1 - \frac{1}{\frac{\beta}{\min(n,1)}+\frac{1}{n}+3}$. The main contribution of this paper is to provide a lower bound on energy decay rates which shows the decay rates from \cite{Kleinhenz2025, DKP25} are sometimes sharp.

	\subsection{Proof Outline}
    We first introduce some preliminary definitions and assumptions to discuss the outline of the proof.
	\begin{assumption}\label{a:new}
		Assume $W$ is nonnegative, bounded, not identically zero, and that there exists $C>0$ such that
		\begin{align}
            &|\p_x W| + |\p_x^2W| + |\p_y W| \leq C W^{\frac{1}{2}}.
		\end{align}
	\end{assumption}
    Let $(f)_+ =\max(0,f)$ and 
    \begin{equation}
        \psi(x) = \begin{cases} \frac{r_0-|x|}{r_0^n-|x|^n},  &|x|\neq r_0 \\
        \frac{1}{n r_0^{n-1}}, & |x|=r_0.
        \end{cases}
    \end{equation}
    For $\e\in(0,\min(\pi-r_0, \frac{r_0}{2}))$ we will assume that 
	\begin{align}\label{eq:Wdef}
		W(x,y) &=c_{\beta,n}(r_0^n-|x|^n -|y|^n)^{\beta}_+ \psi(x)^{\beta+\frac{1}{n}}, \quad r_0-\e < |x| \leq \pi,\\
		c_{\beta,n} &:=\pi\left(\int_0^1 (1-\rho^n)^{\beta} d \rho \right)^{-1}.
	\end{align}
     We define the average of $W$ in the $y$ direction 
    \begin{equation}
        A(W)(x):= \frac{1}{2\pi} \int_{-\pi}^{\pi} W(x,y) dy.
    \end{equation}
    We will also assume that
    \begin{equation} \label{eq:avgBelow}
         A(W)(x)\geq c >0, \quad |x| \leq r_0-\e.
    \end{equation}	
    See Figure \ref{f:3dDamp} for an illustration of a typical damping satisfying these assumptions.   
    \begin{figure}[h]
        \centering
        \includegraphics[width=0.5\linewidth]{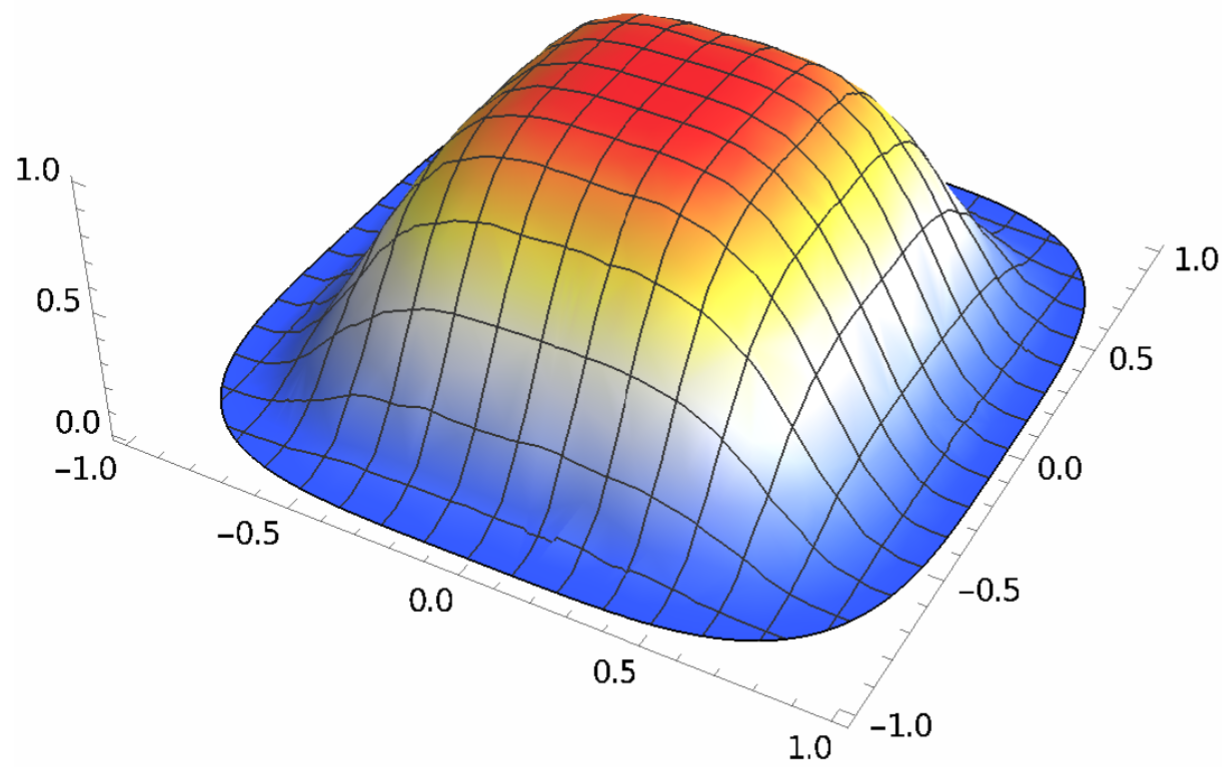}
        \caption{A typical damping satisfying Assumption \ref{a:new}, \eqref{eq:Wdef} and \eqref{eq:avgBelow}.}
        \label{f:3dDamp}
    \end{figure}    
    We now provide an explicit example of $W$ satisfying these assumptions and which we will use to prove Theorem \ref{thm:first}.
    \begin{example}\label{ex:explicit}
	    Let $\varphi \in C^{\infty}_c((-\pi,\pi):[0,1])$ satisfy $\varphi \equiv 1$ for $||x|-r_0|<\e$ and $\varphi \equiv 0$ for $|x|<r_0-2\e$, then define 
        \begin{equation}
            W(x,y)= c_{\beta,n}(r_0^n-|x|^n -|y|^n)^{\beta}_+ \left(1-\varphi(x) + \varphi(x) \psi(x)^{\beta+\frac{1}{n}}\right).
        \end{equation}
        
        %Note that the role of the cutoff $\varphi$ is to remove the singularity of $r_0-|x|$ from $\psi(x)$ at $x=0$. 
	\end{example}
    Note that this $W$ has $\{W>0\}=E_n$. When $\beta \geq 4, n \geq 2$ this $W$ satisfies Assumption \ref{a:new}, \eqref{eq:Wdef} and \eqref{eq:avgBelow}. Note also when $\beta \geq 9,$ and $ n=2,4,6,8,$ or $n \geq 9$, then this $W$ satisfies Assumption \ref{a:old}. Therefore to prove Theorem \ref{thm:first} it remains to show that such a $W$ satisfies \eqref{eq:distComp} and produces the appropriate energy decay. 
    \begin{proposition}\label{p:compDist}
    Suppose $W$ is as in Example \ref{ex:explicit}, then \eqref{eq:distComp} holds. That is, there exists $C>0$ such that 
    \begin{equation}
        \frac{1}{C}d((x,y), \Tb^2 \backslash E_n)^{\beta} \leq W(x,y) \leq C d((x,y),\Tb^2 \backslash E_n)^{\beta}.
    \end{equation}
    \end{proposition}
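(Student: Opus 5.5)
Outside $E_n$ both sides vanish, so the plan is to work on $E_n$ only. Set $F(x,y)=r_0^n-|x|^n-|y|^n$ and $d(x,y)=d((x,y),\Tb^2\backslash E_n)$. I will reduce the claim to the two-sided comparison
\begin{equation}
\frac1C\, d(x,y)\le F(x,y)\le C\, d(x,y),\qquad (x,y)\in E_n.
\end{equation}
The other factor $g(x)=1-\varphi(x)+\varphi(x)\psi(x)^{\beta+\frac1n}$ is bounded above and below by positive constants on $|x|\le r_0$. Indeed $\psi$ is continuous on $[0,r_0]$ away from $x=0$, and $\varphi$ vanishes near $x=0$. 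By the mean value theorem, $\psi(x)=1/(n\xi^{n-1})$ for some $\xi$ between $|x|$ and $r_0$. On $\supp\varphi$ we have $|x|\ge r_0-2\e\ge r_0/2$ hmm (we need $\e<r_0/4$, or simply use $\varphi\equiv0$ for $|x|<r_0-2\e$ with $r_0-2\e>0$). Hence $\psi\in[1/(nr_0^{n-1}),\,1/(n(r_0-2\e)^{n-1})]$ there, and $g$ is a convex combination of $1$ and a quantity bounded between positive constants. Combining these facts gives $W\asymp F^\beta\asymp d^\beta$ on $E_n$.

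For the upper bound $F\le C d$, let $p\in E_n$. Take $q\in\partial E_n$ with $|p-q|=d(p)$; since $E_n\subset(-r_0,r_0)^2$ stays away from the edges of the torus, distances can be computed in $\Rb^2$. The function $G(z)=|x|^n+|y|^n$ is $C^1$ with $|\nabla G|\le 2nr_0^{n-1}$ on the convex set $\overline{E_n}$, because $n\ge1$. Also $G(q)=r_0^n$ and the segment $[p,q]$ lies in $\overline{E_n}$ by convexity. Therefore $F(p)=G(q)-G(p)\le 2nr_0^{n-1}|p-q|$.

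For the lower bound $F\ge c\,d$, the natural approach is a radial/homogeneity argument. Since $G$ is homogeneous of degree $n$, write $p=s\,\omega$ with $\omega\in\partial E_n$ and $s=(G(p))^{1/n}/r_0\in[0,1)$. Then $F(p)=r_0^n(1-s^n)\ge r_0^n(1-s)$. Moreover $\omega\notin E_n$, so $d(p)\le|p-\omega|=(1-s)|\omega|\le\sqrt2\,r_0(1-s)$. Hence $d(p)\le\sqrt2\,r_0^{1-n}F(p)$.

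I expect no real obstacle here. The only care needed is the bounds on $\psi$ (the removable point $|x|=r_0$ is handled by continuity) and checking that the torus distance agrees with the Euclidean one. The lower bound is the step most likely to tempt a gradient-based argument, which fails at the origin-free points where $\nabla G$ degenerates, such as the axes when $n>2$. The homogeneity argument sidesteps this.
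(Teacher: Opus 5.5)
Your proof is correct. It has the same overall structure as the paper: reduce to comparing $d$ with $q=r_0^n-|x|^n-|y|^n$, then bound the $\psi$-factor above and below. Your bound $q\lesssim d$ is also the paper's argument: Lipschitz along the segment to the nearest boundary point, using convexity. The genuine difference is in the bound $d\lesssim q$.

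\textbf{The bound $d\lesssim q$.} The paper proves Lemma \ref{l:comparable} by a case split.
\begin{enumerate}
\item On $\{d\ge\eta\}$ it uses compactness and continuity of $q$.
\item On $\{0<d<\eta\}$ it uses the symmetries of $E_n$ to reduce to $x_1>r_0/4$, $y_1\ge0$. It then compares $d$ with the horizontal distance $\tilde x_1-x_1$ to the boundary point $((r_0^n-y_1^n)^{1/n},y_1)$, via the mean value theorem for $z\mapsto z^n$.
\end{enumerate}
Your radial scaling $p=s\omega$ replaces the case split and the symmetry reduction with one global estimate. It also gives the explicit constant $\sqrt2\,r_0^{1-n}$. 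What it buys you is brevity, but it relies on $E_n$ being the sublevel set of an $n$-homogeneous function. The paper's argument is local: it only needs a lower bound on $\partial_x(|x|^n)$ near the relevant boundary pieces. So it adapts more readily to non-homogeneous domains, at the cost of non-explicit constants.

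\textbf{The $\psi$-factor.} The paper shows $\psi'\le0$ on $[0,r_0)$, which gives $\psi(r_0)\le\psi\le\psi(0)$ on all of $[0,r_0]$. You instead write $\psi=1/(n\xi^{n-1})$ and use $|x|\ge r_0-2\varepsilon>0$ on the support of $\varphi$. Both are fine.

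\textbf{Two small inaccuracies.}
\begin{enumerate}
\item For $n=1$, $G=|x|+|y|$ is not $C^1$; it is only Lipschitz with constant $\sqrt2$. That is all your upper bound needs, and it is how the paper phrases it ($\nabla q\in L^\infty$).
\item Your closing remark is wrong. $\nabla G=n(|x|^{n-1}\mathrm{sgn}\,x,\;|y|^{n-1}\mathrm{sgn}\,y)$ vanishes only at the origin. On $\partial E_n$ one has $\max(|x|,|y|)\ge 2^{-1/n}r_0$, so $|\nabla G|$ is bounded away from zero there for every $n\ge1$. What degenerates at the axis points when $n>2$ is the curvature of $\partial E_n$, not the gradient. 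A gradient/mean-value argument near the boundary therefore works, and this is exactly what the paper does, using $x_1\ge r_0/4$.
\end{enumerate}
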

    We prove this in Section \ref{s:distance}.
    
    We now state our result providing a lower bound on the polynomial energy decay rate.
	\begin{theorem}\label{thm:main}
		Consider $W$ satisfying Assumption \ref{a:new}, \eqref{eq:Wdef} with $\beta \geq 4, n\geq 1$, and \eqref{eq:avgBelow},
		then \eqref{eq:stable} cannot hold with 
		\begin{equation}
			\alpha \geq 1- \frac{1}{\beta+\frac{1}{n}+3} +\d,
		\end{equation}
		for any $\d>0$.
	\end{theorem}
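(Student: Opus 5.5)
We argue by contradiction through the semigroup characterization of polynomial decay. By the Borichev--Tomilov theorem (in the form used in \cite{AL14, DatchevKleinhenz2020}), \eqref{eq:stable} with rate $t^{-2\alpha}$, i.e. energy norm decay $t^{-\alpha}$, is equivalent to the resolvent bound
\begin{equation}
\nm{(-\Delta + i\lambda W - \lambda^2)^{-1}}_{L^2\to L^2} \leq C \lambda^{\frac{1}{\alpha}-1}, \qquad \lambda \geq 1.
\end{equation}
It therefore suffices to build quasimodes $v_\lambda$ with $\lambda_j\to\infty$, $\ltwo{v_\lambda}=1$ and
\begin{equation}
\ltwo{(-\Delta+i\lambda W-\lambda^2)v_\lambda} \lesssim \lambda^{-(\frac{1}{\alpha_0}-1)}, \qquad \alpha_0 = 1-\frac{1}{\beta+\frac1n+3},
\end{equation}
possibly up to a $\lambda^{o(1)}$ loss. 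Such quasimodes rule out every $\alpha \ge \alpha_0+\d$.

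The plan is to reduce to the $y$-invariant problem with the averaged damping $A(W)$, for which \cite{DatchevKleinhenz2020} (see also \cite{Kleinhenz2019}) give the sharp quasimodes. The first step is to compute $A(W)$ near $|x|=r_0$. From \eqref{eq:Wdef}, substituting $y=(r_0^n-|x|^n)^{1/n}\rho$ gives
\begin{equation}
A(W)(x)=\tfrac{c_{\beta,n}}{\pi}(r_0^n-|x|^n)_+^{\beta+\frac1n}\psi(x)^{\beta+\frac1n}\int_0^1(1-\rho^n)^\beta d\rho=(r_0-|x|)_+^{\beta+\frac1n},
\end{equation}
which is exactly why $\psi$ and $c_{\beta,n}$ were chosen. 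Together with \eqref{eq:avgBelow}, $A(W)$ is a strip-type damping of power $\beta':=\beta+\frac1n$. The second step is to take the one-dimensional quasimodes $u(x)e^{iky}$ from \cite{DatchevKleinhenz2020} for $-\p_x^2 + i\lambda A(W)(x) + k^2 - \lambda^2$. These concentrate in $x$ near the undamped region, with $\lambda^2-k^2\sim \lambda^{2/(\beta'+2)}$. They are supported at scale $h$ inside $\{|x|<r_0\}$ near the boundary, or essentially in $\{|x|>r_0\}$ with an exponentially small tail, and they achieve error $\lambda^{-(1/\alpha_0-1)}$ with $\frac{1}{\alpha_0}-1=\frac{1}{\beta'+2}$.

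The third step, which is the main one, replaces $W$ by its average using a simplified normal form. Write $W=A(W)+(W-A(W))$, where $W-A(W)$ has mean zero in $y$. Take the ansatz $v=(1+\lambda Q)\,u(x)e^{iky}$ with $Q$ solving $2ik\p_y Q = -i(W-A(W))$, that is $Q=-\frac{1}{2k}\p_y^{-1}(W-A(W))$. Then $-\p_y^2$ acting on $e^{iky}$ cancels the oscillating part to leading order. The remainders are of three kinds:
\begin{enumerate}
\item terms $\lambda\,\p_y^2 Q$ and $\lambda\,\p_x^2 Q$, controlled by $\lambda k^{-1}\nm{\p_y W}$ and $\lambda k^{-1}\nm{\p_x^2 W}$;
\item the cross term $\lambda \p_x Q\,\p_x u$;
\item the term $\lambda^2 W Q$.
\end{enumerate}
Since $k\sim\lambda$, these are small provided $W$ and its derivatives are small on the support of $u$. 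This is where Assumption \ref{a:new} enters: the bounds $|\p_xW|+|\p_x^2W|+|\p_yW|\lesssim W^{1/2}$, combined with the facts that $u$ lives where $A(W)\lesssim\lambda^{-1}\cdot(\text{scale})$ and that $\ltwo{\p_x u}\lesssim \lambda^{1/(\beta'+2)}$, should make each remainder $O(\lambda^{-1/(\beta'+2)-\epsilon})$ relative to $\nm{u}$. The condition $\beta\ge4$ is what should make these powers close. I expect this bookkeeping to be the main obstacle. In particular, one must check that $\lambda^2 WQ\sim \lambda W^2$ and the cross term beat the target error on the region where the quasimode lives; this requires using the explicit $(r_0-|x|)^{\beta'}$ profile to localize $u$ at width $\lambda^{-1/(\beta'+2)}$.

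Finally, $\ltwo{v}\ge \tfrac12$ because $\lambda Q=O(W)=o(1)$ on the support of $u$. Normalizing gives the desired quasimodes, hence the resolvent lower bound, and via Borichev--Tomilov the failure of \eqref{eq:stable} for $\alpha\ge\alpha_0+\d$.
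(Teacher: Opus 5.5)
You have the paper's architecture: Borichev--Tomilov, the exact identity $A(W)(x)=(r_0-|x|)_+^{\beta+\frac1n}$, the $y$-invariant quasimodes $u_h=e^{iky}v_h(x)$ of \cite{Kleinhenz2019}, and a normal form built from $\int_{-\pi}^y(W-A(W))\,dy'$ that needs no microlocal cutoffs because $u_h$ is exactly $e^{iky}$ in $y$. The gap is the step you defer as bookkeeping, which does not go through as you set it up.

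With the \emph{linear} ansatz $v=(1+P)u$, $P=\lambda Q$, the term $-2(\p_yP)(\p_yu)$ cancels only the ``$1$'' part of $(1+P)\,i\lambda(W-A(W))u$. The leftover $i\lambda P(W-A(W))u$, with $|P|\lesssim A(W)$, carries an extra factor of $\lambda$. The bounds that actually hold for these quasimodes are:
\begin{itemize}
\item $\ltwo{W^{1/2}u_h}+\ltwo{A(W)^{1/2}u_h}=O(h^{1/2}\rho_h^{1/2})$, from the imaginary part of the one-dimensional quasimode identity and the $y$-independence of $|u_h|$;
\item $\ltwo{A(W)^{1/2}\p_xu_h}=O(\rho_h)$, from $\ltworp{z^{\gamma/2}F'}\le C$.
\end{itemize}
These only give $\lambda\ltwo{P(W-A(W))u_h}\lesssim\lambda^{1/2}\rho_h^{1/2}$, far larger than the target $\rho_h=\lambda^{-1/(\beta'+2)}$. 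To close it your way you must use $A(W)|W-A(W)|\lesssim(r_0-|x|)^{2\beta+\frac1n}$ on $\supp\chi$. That yields $\rho_h^{\beta-\frac12}\ltworp{z^{2\beta+\frac1n}F}$, so you need $\ltworp{z^{2\beta+\frac1n}F(\cdot;\theta)}$ bounded uniformly in $|\theta|\le q_0/4$, or Agmon-type decay of $F$. This is plausible, but it is an extra lemma that neither your proposal nor Lemma \ref{l:Funiform} supplies. ``Localizing at width $\lambda^{-1/(\beta'+2)}$'' is exactly what needs proof, since $u_h$ extends through $\supp\chi$ down to $|x|=r_0-\e$.

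The missing idea is to exponentiate: take $w_h=e^gu_h$ with $g=\frac12\int_{-\pi}^y(W-A(W))\,dy'$. Because $\p_ye^g=\frac12(W-A(W))e^g$ exactly, you get $2h(D_ye^g)hD_yu_h+ihe^g(W-A(W))u_h=(1-hk)\,ihe^g(W-A(W))u_h=O_{L^2}(h^3)$. The quadratic quantity $(W-A(W))^2$ then appears only inside $h^2(D_y^2e^g)$, without your extra factor $h^{-1}$. All remaining terms, $h^2(D_x^2e^g)u_h$, $h^2(D_y^2e^g)u_h$ and $2h(D_xe^g)hD_xu_h$, involve only $(W-A(W))^2$, $\p_yW$ and $\p_x^j\int_{-\pi}^y(W-A(W))$. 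Each of these is bounded by $W^{1/2}+A(W)^{1/2}$ via Assumption \ref{a:new} and Jensen (Lemma \ref{l:avgDBC}), so the two bounds above finish the proof.

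Several of your inputs are also off:
\begin{itemize}
\item With $e^{iky}$ and $-\Delta+i\lambda W-\lambda^2$, cancellation requires $2k\p_yQ=+(W-A(W))$; your sign doubles the term instead of cancelling it.
\item $\lambda^2-k^2=\frac{\pi^2}{4(\pi-r_0)^2}=O(1)$, not $\sim\lambda^{2/(\beta'+2)}$.
\item The damped-side profile lives at scale $\rho_h$, not $h$.
\item The unweighted bound $\ltwo{\p_xu}\lesssim\lambda^{1/(\beta'+2)}$ cannot control the cross term; that needs the weighted bound on $A(W)^{1/2}\p_xu_h$.
\end{itemize}
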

    \begin{remark}
        Note that we only require $n \geq 1$, in contrast to Example \ref{ex:explicit}. In Example \ref{ex:explicit}, note that $|x|^n$ does not have sufficient regularity near $x=0$ to satisfy Assumption \ref{a:new} for $n<2$.
    \end{remark}
    To prove this we apply \cite[Theorem 2.4]{BorichevTomilov2010}, as stated in \cite[Proposition 2.4]{AL14}, which states that energy decay as in \eqref{eq:stable} is equivalent to the following resolvent estimate: there exist $C,\lambda_0>0$ such that 
	\begin{equation}\label{e:resest}
		\nm{\left(-\Delta+ i\lambda W - \lambda^2 \right)^{-1}}_{\Lc(L^2(\Tb^2))} \leq C |\lambda|^{\frac{1-\alpha}\alpha},  \qquad \text{for }|\lambda| \geq \lambda_0, \lambda \in \Rb.
	\end{equation}
	Therefore to prove Theorem \ref{thm:main} it suffices to prove that there exists $C>0$ and $\lambda_j \ra \infty$ such that 
	\begin{equation} \label{eq:resolventLowerBound}
		\nm{\left(-\Delta+ i\lambda_j W - \lambda_j^2 \right)^{-1}}_{\Lc(L^2(\Tb^2))} \geq C |\lambda_j|^{\frac{1}{\beta+\frac{1}{n}+2}}.
	\end{equation}
	We do this by producing quasimodes in Section \ref{s:quasimode}.    

	To prove the energy decay rate in Theorem \ref{thm:first} we invoke \cite[Theorem 6.3]{Kleinhenz2025} which we recall a simplified version of here. See also  \cite[Theorem 1.10]{DKP25},
    \begin{theorem}[\cite{Kleinhenz2025}]
        Suppose $W$ satisfies Assumption \ref{a:old} and \eqref{eq:distComp} for some $\beta>0$ and $n \geq 2$, then \eqref{eq:stable} holds with 
    \begin{equation}
        \alpha = 1- \frac{1}{\beta+\frac{1}{n}+3}.
    \end{equation}
    \end{theorem}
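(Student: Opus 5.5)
Since this result is quoted from \cite[Theorem 6.3]{Kleinhenz2025}, in the paper it suffices to cite it. If I had to prove it directly, I would follow the strategy of \cite{Kleinhenz2025, DKP25}. By \cite[Theorem 2.4]{BorichevTomilov2010} it is enough to prove the resolvent estimate \eqref{e:resest} with $\frac{1-\alpha}{\alpha} = \frac{1}{\beta+\frac1n+2}$. I would argue by contradiction. Suppose there are $\lambda_j \to \infty$ and $u_j$ with $\|u_j\|_{L^2}=1$ and $\|(-\Delta + i\lambda_j W - \lambda_j^2)u_j\| = o(\lambda_j^{-\frac{1}{\beta+\frac1n+2}})$. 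Pairing with $u_j$ and taking imaginary parts gives $\lambda_j\|W^{1/2}u_j\|^2 = o(\lambda_j^{-\frac{1}{\beta+\frac1n+2}})$. Hence $u_j$ must concentrate, in phase space, near undamped trajectories.

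\emph{Localization.} On $\Tb^2$ the undamped geodesics are the horizontal and vertical lines avoiding $E_n$. The dangerous ones are those tangent to $\p E_n$, namely the vertical lines near $x=\pm r_0$ and the horizontal lines near $y = \pm r_0$, together with their nearby directions. Away from a small conic neighbourhood of the vertical and horizontal directions, every geodesic crosses the interior of $E_n$ transversally. There a propagation or commutator argument shows the $u_j$ carry negligible mass. Assumption \ref{a:old} enters here: the bound $|\nabla W|\lesssim W^{3/4}$ lets commutators $[\Delta, W]$ be absorbed by $\|W^{1/2}u_j\|$. By symmetry I then treat only the nearly vertical directions.

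\emph{Averaging in $y$.} This is the step I expect to be the main obstacle. Near vertical directions, $u_j$ oscillates at frequency $\lambda_j$ in $y$ and much more slowly in $x$. I would perform a normal form (averaging) conjugation that replaces $W(x,y)$ by its $y$-average $A(W)(x)$. The error is governed by $\p_y$-derivatives of $W$ divided by $\lambda_j$, and it must be bounded using $|\nabla W|,|\nabla^2 W|$ controlled by powers of $W$, so that it is $o$ of the main term. This is where the regularity $W\in W^{9,\infty}$ and Assumption \ref{a:old} are used in full.

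\emph{One-dimensional reduction.} The key computation is the effective damping. Near $x=r_0$, \eqref{eq:distComp} gives $W\approx (r_0-|x| - c|y|^n)_+^\beta$ up to constants, since $\p E_n$ there is the graph of $|x| = r_0 - c|y|^n + \dots$. Integrating in $y$ over the window $|y|\lesssim (r_0-|x|)^{1/n}$ then yields
\begin{equation}
A(W)(x)\approx (r_0-|x|)_+^{\beta+\frac1n}.
\end{equation}
The averaged problem is thus $y$-invariant damping with exponent $\beta+\frac1n$. For such damping, \cite{DatchevKleinhenz2020} gives a resolvent bound of order $\lambda^{\frac{1}{(\beta+\frac1n)+2}}$. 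This contradicts the assumed sequence $u_j$, and hence \eqref{eq:stable} holds with $\alpha = 1-\frac{1}{\beta+\frac1n+3}$. The restriction $n\geq 2$ is what makes the curvature expansion of $\p E_n$ and the derivative bounds compatible with the averaging step.
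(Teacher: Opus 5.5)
Citing \cite[Theorem 6.3]{Kleinhenz2025} (see also \cite[Theorem 1.10]{DKP25}) is exactly what the paper does, since it gives no proof of its own, so your proposal matches it. One correction to your optional sketch: it is not true that only horizontal and vertical geodesics avoid $E_n$. For $r_0<\pi 2^{\frac1n-1}$ the diagonal line $y=x+\pi$ misses every translate of $E_n$. Such directions graze $\p E_n$ at points of positive curvature, so they cannot be discarded by a propagation argument and must also be handled by averaging. That gives effective exponent $\beta+\frac12$ there, which does not worsen the rate because $n\geq 2$ forces $\beta+\frac12\geq\beta+\frac1n$; this is the natural role of $n \geq 2$, rather than compatibility with the averaging step.
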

    Since for $\beta \geq 9$ and $n=2,4,6,8,$ or $n \geq 9$, $W$ as defined in Example \ref{ex:explicit} satisfies Assumption \ref{a:old} and, by Proposition \ref{p:compDist}, \eqref{eq:distComp}, the energy decay rate in the second part of Theorem \ref{thm:first} is an immediate consequence of this theorem.

    \textbf{Acknowledgments:} The authors are grateful to Kiril Datchev and Antoine Prouff for helpful conversations. 
    
	\textbf{Funding:} This work was supported by the National Science Foundation [DMS-2530465 to P.K.].

	\section{Proof that $W$ is approximately a power of the distance to the boundary}\label{s:distance}
	% We begin with an immediate consequence of the mean value theorem, which we will repeatedly use.
	% \begin{lemma}\label{l:MVT}
	% 	Consider $n \geq 1$ and $0\leq a<b$. Then,
	% 	\begin{equation}
	% 		na^{n-1}\leq \frac{b^n-a^n}{b-a}\leq nb^{n-1}.
	% 	\end{equation}
	% \end{lemma}
	% \begin{proof}
	% 	The statement is immediate for $n=1$. Let $c\in(a,b)$, then for $n>1$ by the mean value theorem, we have 
	% 	\begin{align*}
	% 		na^{n-1} <nc^{n-1}&=\frac{b^n-a^n}{b-a}<n b^{n-1}.
	% 	\end{align*}
	% \end{proof}
	We first show that the distance from $(x,y)$ to $\Tb^2\backslash E_n$ is comparable to $(r_0^n-|x|^n-|y|^n)_+$.
	\begin{lemma}\label{l:comparable}
		Fix $n \geq 1$. There exists $C>0$, such that %for $(x,y) \in E_n$ %and in an $\e$ neighborhood of one of $\{(\pm r_0,0),(0,\pm r_0)\}$,  
		\begin{equation}
			C^{-1} d((x,y),\Tb^2 \backslash E_n) \leq (r_0^n-|x|^n-|y|^n)_+ \leq C d((x,y),\Tb^2 \backslash E_n).
		\end{equation}
	\end{lemma}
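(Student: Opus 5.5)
Write $F(x,y)=|x|^n+|y|^n$, viewed on the fundamental domain $[-\pi,\pi)^2$; since $r_0<\pi$, $\overline{E_n}$ lies in the interior of this square, so on $E_n$ the torus distance to $\Tb^2\backslash E_n$ equals the Euclidean distance to the set $\{F\ge r_0^n\}$ (for $r_0$ small enough that wrapping cannot give a shorter path; otherwise a path leaving through the boundary of the square still meets $\{F = r_0^n\}$ first). Off $E_n$ both sides vanish, so it suffices to treat $(x,y)\in E_n$ and set $d=d((x,y),\Tb^2\backslash E_n)$, which is the Euclidean distance to the level curve $\{F=r_0^n\}$.

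\textbf{Upper bound $r_0^n-F\le Cd$.} Let $q$ be a closest point with $F(q)=r_0^n$, so $|q-(x,y)|=d$. Since $F$ is Lipschitz on the bounded set $\{F\le r_0^n\}$, with constant $L\le \sqrt2\, n r_0^{n-1}$ (because $|\nabla F|=n(|x|^{2n-2}+|y|^{2n-2})^{1/2}$ and $n\ge1$; for $n=1$, $F$ is Lipschitz with constant $\sqrt 2$ directly), we get
\begin{equation}
r_0^n-F(x,y)=F(q)-F(x,y)\le L\,d .
\end{equation}

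\textbf{Lower bound $d\le C(r_0^n-F)$.} This is the main step, since $\nabla F$ vanishes at the origin and on the axes when $n>1$, so we cannot simply invert a gradient bound. I would move radially instead. Use the homogeneity $F(t(x,y))=t^nF(x,y)$: set $t=r_0/F(x,y)^{1/n}\ge1$, so the point $t(x,y)$ lies on $\{F=r_0^n\}$ and
\begin{equation}
d\le (t-1)|(x,y)|.
\end{equation}
If $F(x,y)\ge r_0^n/2^n$, then $|(x,y)|\le \sqrt2\,r_0$, and writing $s=F^{1/n}\in[r_0/2,r_0)$, the mean value theorem for $s\mapsto s^n$ gives $r_0^n-s^n\ge n s^{n-1}(r_0-s)\ge n(r_0/2)^{n-1}(r_0-s)$. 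Hence
\begin{equation}
(t-1)|(x,y)|=\frac{r_0-s}{s}|(x,y)|\le C(r_0-s)\le C(r_0^n-F).
\end{equation}
If instead $F<r_0^n/2^n$, then $r_0^n-F\ge r_0^n(1-2^{-n})$ is bounded below, while $d\le \operatorname{diam}$ is bounded above, so $d\le C(r_0^n-F)$ holds trivially.

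Combining the two cases gives the claimed comparability, with $C$ depending only on $n$ and $r_0$. The only delicate point is confirming that torus distance equals Euclidean distance to the level set; this uses $r_0<\pi$, and only an upper bound on $d$ is needed for the lower-bound direction anyway.
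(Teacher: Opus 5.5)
Your proof is correct. It differs from the paper's in how you prove $d\le C(r_0^n-|x|^n-|y|^n)$. The reverse inequality is the same Lipschitz-plus-closest-point argument in both proofs; the paper justifies the Lipschitz constant $\|\nabla q\|_\infty$ by convexity of $E_n$, and that convexity is also what makes your gradient bound on $\{F\le r_0^n\}$ legitimate.

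For the main inequality, the paper splits by the size of $d$. For $d\ge\eta$ it uses compactness of $K_\eta$ and positivity of $q$ there. For $0<d<\eta$ it uses the symmetries of $E_n$ to reduce to $x_1>r_0/4$, $y_1\ge 0$, moves horizontally to $(\tilde x_1,y_1)$ with $\tilde x_1=(r_0^n-y_1^n)^{1/n}$, and applies the mean value theorem to $z\mapsto z^n$ on $[x_1,\tilde x_1]$. The symmetry reduction is needed there to keep the factor $n(r_0/4)^{n-1}$ nondegenerate, since $\partial_x q$ degenerates near $\{x=0\}$ when $n>1$.

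You move radially instead. By homogeneity of $F$, the radial mean value bound is uniform once $F\ge (r_0/2)^n$, so no symmetry reduction is needed. Because you split by the value of $F$ rather than by $d$, the compactness step becomes the trivial bound $d\le\operatorname{diam}\Tb^2$, and your constants are explicit in $n$ and $r_0$.

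What each buys: the paper's argument only needs $\partial_x q$ bounded away from zero near the relevant boundary piece. Yours needs the radial derivative bounded away from zero, which homogeneity gives directly for the superellipse.

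You are also more careful than the paper about the torus distance, which the paper takes for granted when it picks a closest point on $\partial E_n$. The clean justification is this: a minimizing segment in the universal cover from a point of $E_n$ to a lift of a point outside $E_n$ must first exit through $\partial E_n$. The image of $\partial E_n$ lies in $\Tb^2\backslash E_n$ because $r_0<\pi$ keeps the translates of $E_n$ disjoint from $\overline{E_n}$.
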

	\begin{proof}
    Note that when $d((x,y),\Tb^2 \backslash E_n)=0$ then $(r_0^n-|x|^n-|y|^n)_+=0$. Thus for $\eta>0$ small enough, it is enough to separately consider cases where $d((x,y),\Tb^2 \backslash E_n) \geq \eta$ or $0<d((x,y),\Tb^2 \backslash E_n)<\eta$.

    1) The set $K_{\eta} =\{(x,y) \in E_n:d((x,y),\Tb^2 \backslash E_n) \geq \eta\}$ is compact. Consider 
    \begin{equation}
        q(x,y)=r_0^n-|x|^n-|y|^n.
    \end{equation}
    Since $q$ is continuous and $q(x,y)=0$ only for $(x,y) \in \p E_n$, there exists $c >0$ such that for all $(x,y) \in K_{\eta}$ we have
    \begin{align}
        &c \leq q(x,y) \leq r_0^n, \\
        &\eta \leq d((x,y),\Tb^2 \backslash E_n) \leq r_0.
    \end{align}
    Therefore 
    \begin{equation}
        \frac{c}{r_0}\leq \frac{q(x,y)}{d((x,y),\Tb^2 \backslash E_n)} \leq  \frac{r_0^n}{\eta}.
    \end{equation}
    After rearranging, this provides the desired comparability.
    
    2) When $0<d((x,y),\Tb^2 \backslash E_n)<\eta$ for $\eta>0$ sufficiently small, by the symmetry of $E_n$ about the coordinate axes and the lines $y=\pm x$, it suffices to consider only $(x_1,y_1)$ with $x_1>\frac{r_0}{4}$ and $0 \leq y_1$.
    Let $(x_0,y_0) \in \p E_n$ be the point on $\p E_n$ closest to $(x_1,y_1)$, that is 
		\begin{equation}
			d((x_1,y_1),\Tb^2 \backslash E_n) = d((x_1,y_1),(x_0,y_0)).
		\end{equation}
		Also let $\ti{x}_1 =(r_0^n-y_1^n)^{\frac{1}{n}}$ so that $(\ti{x}_1,y_1) \in \p E_n$. Since $x_1>0$ and $(x_1,y_1) \in E_n$ we have $x_1 \leq \ti{x}_1 \leq r_0$. Furthermore
		\begin{equation}
			q(x_1,y_1) = r_0^n -x_1^n-y_1^n = \ti{x}_1^n - x_1^n.
		\end{equation}
		By the Mean Value Theorem applied to $f(z)=z^n$, and since $\frac{r_0}{4} \leq x_1 \leq \ti{x}_1\leq r_0$, we have
		\begin{equation}
			n\left(\frac{r_0}{4}\right)^{n-1}(\ti{x}_1-x_1) \leq \ti{x}_1^n - x_1^n =q(x_1,y_1)\leq n r_0^{n-1}(\ti{x}_1-x_1).
		\end{equation}
		  Now since $(\ti{x}_1,y_1) \in \p E_n$ we have
		\begin{equation} \label{eq:qint1}
			d((x_1,y_1), \Tb^2 \backslash E_n) \leq d((x_1,y_1),(\ti{x}_1,y_1)) = \ti{x}_1-x_1 \leq \frac{1}{n}\left(\frac{4}{r_0}\right)^{n-1}
            q(x_1,y_1).
		\end{equation}
		To show the opposite inequality, note that since $n \geq 1$ we have $\nabla q \in L^{\infty}$. Since $E_n$ is convex, $q$ is Lipschitz with Lipschitz constant $\lp{\nabla q}{\infty}$. Using that $(x_0,y_0) \in \p E_n$ so $q(x_0,y_0)=0$, and that $q$ is Lipschitz we have
		\begin{equation}\label{eq:qint2}
			q(x_1,y_1) = |q(x_1,y_1)-q(x_0,y_0)| \leq \lp{\nabla q}{\infty} d((x_1,y_1),(x_0,y_0))= \lp{\nabla q}{\infty} d((x_1,y_1),\Tb \backslash E_n).
		\end{equation}
        Together \eqref{eq:qint1} and \eqref{eq:qint2} provide the desired comparability.
	\end{proof}
    We now complete the proof of Proposition \ref{p:compDist}.
    \begin{proof}[Proof of Proposition \ref{p:compDist}]
        Since 
        \begin{equation}
            W(x,y) = c_{\beta,n} (r_0^n - |x|^n-|y|^n)_+^{\beta}(1-\phi(x) + \phi(x) \psi(x)^{\beta+\frac{1}{n}}),
        \end{equation}
        by Lemma \ref{l:comparable} it suffices to show there exists $C>1$ such that  
        \begin{equation}
            C^{-1} \leq 1-\varphi(x) + \varphi(x) \psi(x)^{\beta+\frac{1}{n}} \leq C.
        \end{equation}
        Then since $0 \leq \vphi(x) \leq 1$, it is enough to show that there exists $C >1$ such that 
        \begin{equation}
            \frac{1}{C}  \leq \psi(x) \leq C.       
        \end{equation}
        To show this we first compute for $x \in [0,r_0)$
    \begin{equation}
        \psi'(x) = \frac{d}{dx} \left(\frac{r_0-x}{r_0^n-x^n}\right)=\frac{nx^{n-1}(r_0-x) - (r_0^n-x^n)}{(r_0^n-x^n)^2}.
    \end{equation}
    For $n>1$ by the Mean Value Theorem, there exists $z \in (x,r_0)$ such that $r_0^n-x^n=(r_0-x) n z^{n-1}$. Thus 
    \begin{equation}
        n x^{n-1}(r_0-x) - (r_0^n-x^n) = n (x^{n-1} - z^{n-1})(r_0-x) <0.
    \end{equation}
    For $n=1$ we have $\psi'(x)=0$, thus $\psi'(x) \leq 0$ for $0\leq x<r_0$, and $\psi$ is decreasing there. Since $\psi$ is continuous on $[0,r_0]$ we have
    \begin{equation}
         \frac{1}{n r_0^{n-1}} =\psi(r_0)\leq \psi(x) \leq  \psi(0)=\frac{1}{r_0^{n-1}}.
    \end{equation}
    This establishes the desired bound on $\psi$ and so $W$ is comparable to a power of the distance to $\Tb^2 \backslash E_n$ as desired.
    \end{proof}

	\section{Quasimode construction}\label{s:quasimode}
	In this section we prove \eqref{eq:resolventLowerBound} by producing sequences $w_j \in H^2(\Tb^2), \lambda_j \in \Rb$ such that $|\lambda_j| \ra \infty$ and 
	\begin{equation}
		\ltwo{(-\Delta+i\lambda_j W-\lambda_j^2)w_j} \leq  C\lambda_j^{\frac{-1}{\beta+\frac{1}{n}+2}} \ltwo{w_j}.
	\end{equation}
	We take a semiclassical rescaling. Let $h_j=\lambda_j^{-1}$ and we will drop the $j$ subscript, writing $w_j=w_{h_j}=w_h$. So, it is enough to find $h \ra 0$, and  $w_h \in H^2(\Tb^2)$ such that 
	\begin{equation}
		(-h^2 \Delta + i h W - 1) w_h = O_{L^2}\left(h^2 h^{\frac{1}{\beta+\frac{1}{n}+2}}\right).
	\end{equation}
	
	The idea is to replace the damping $W$ by its average in the $y$ direction
	\begin{equation}
		A(W)(x)  = \frac{1}{2\pi}\int_{-\pi}^\pi W(x,y)dy.
	\end{equation}
	Then using \cite{Kleinhenz2019} we obtain a sequence of quasimodes $u_h$ for $(-h^2\Delta +ih A(W)-1)$ which we can adjust to quasimodes for the original problem. We do this with a simplification of the normal form argument of \cite[Section 7]{Sun23}. Specifically, we will let 
    \begin{align}
        &w_h(x,y) = e^{g(x,y)} u_h(x,y), \\
        &g(x,y):= \frac{1}{2} \int_{-\pi}^y (W(x,y')-A(W)(x)) dy'.
    \end{align}
    We are able to do this and avoid the use of microlocal analysis because the quasimodes of \cite{Kleinhenz2019} are already frequency localized in the $y$-direction.

	\subsection{Preliminary Lemmas}
	We begin by explicitly computing the average of our damping.
	\begin{lemma}\label{l:avgdamp}
		Suppose $W$ satisfies  \eqref{eq:Wdef} for $\beta > 0$, then for $|x| > r_0-\e$ we have
		\begin{equation}
			A(W)(x)=(r_0-|x|)^{\beta+\frac{1}{n}}_+.
		\end{equation}
	\end{lemma}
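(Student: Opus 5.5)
This is a direct computation. Fix $x$ with $r_0-\e<|x|\le\pi$, so that \eqref{eq:Wdef} gives $W$ explicitly. If $|x|\ge r_0$, then $r_0^n-|x|^n-|y|^n\le 0$ for every $y$, so $W(x,\cdot)\equiv 0$ and $A(W)(x)=0=(r_0-|x|)_+^{\beta+\frac1n}$. It remains to treat $r_0-\e<|x|<r_0$. Here $r_0^n-|x|^n>0$ and, by definition, $\psi(x)=\frac{r_0-|x|}{r_0^n-|x|^n}$.

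Set $a:=(r_0^n-|x|^n)^{1/n}\in(0,r_0)$. The integrand $(a^n-|y|^n)_+^\beta$ vanishes unless $|y|<a\le r_0<\pi$, so the support lies inside $(-\pi,\pi)$ and no periodization issue arises. Using evenness in $y$,
\begin{equation}
\int_{-\pi}^{\pi}(r_0^n-|x|^n-|y|^n)_+^\beta\,dy = 2\int_0^a (a^n-y^n)^\beta\,dy .
\end{equation}
Substitute $y=a\rho$ to get
\begin{equation}
2a^{n\beta+1}\int_0^1(1-\rho^n)^\beta\,d\rho = 2(r_0^n-|x|^n)^{\beta+\frac1n}\int_0^1(1-\rho^n)^\beta\,d\rho .
\end{equation}

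Next multiply by $\frac{1}{2\pi}c_{\beta,n}$. By the choice $c_{\beta,n}=\pi\left(\int_0^1(1-\rho^n)^\beta d\rho\right)^{-1}$, the factors $2$, $\frac{1}{2\pi}$, $\pi$ and the integral all cancel, leaving $(r_0^n-|x|^n)^{\beta+\frac1n}$. Finally multiply by $\psi(x)^{\beta+\frac1n}=\left(\frac{r_0-|x|}{r_0^n-|x|^n}\right)^{\beta+\frac1n}$. The powers of $r_0^n-|x|^n$ cancel exactly and give $(r_0-|x|)^{\beta+\frac1n}$, which is the claim.

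There is no real obstacle here. The only points needing care are confirming that the $y$-support lies within one period (this uses $r_0<\pi$), and that the exponent of $r_0^n-|x|^n$ produced by the scaling is exactly $\beta+\frac1n$, so that it cancels against $\psi^{\beta+\frac1n}$. The hypothesis $\beta>0$ only ensures the $\rho$-integral is finite and positive.
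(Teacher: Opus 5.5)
Your proposal is correct and follows essentially the same route as the paper: you dispose of $|x|\ge r_0$ directly, then for $r_0-\e<|x|<r_0$ you use evenness in $y$, substitute $y=(r_0^n-|x|^n)^{1/n}\rho$, and cancel against $c_{\beta,n}$ and $\psi^{\beta+\frac1n}$. Your explicit check that the $y$-support lies inside one period because $r_0<\pi$ is a small point the paper leaves implicit.
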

	\begin{proof}
    For $|x| \geq r_0$, it is immediate that $A(W)(x)=0$.
		By symmetry in $x$, it suffices to consider $r_0-\e < x < r_0$. Then by symmetry of $|y|^n$ and the definition of $W$ we have
		\begin{align}
			A(W)(x)=\frac{c_{\beta,n}}{\pi} \left(\frac{r_0-x}{r_0^n-x^n}\right)^{\beta +\frac{1}{n}} \int_0^{(r_0^n-x^n)^{\frac{1}{n}}} (r_0^n-x^n-y^n)^\beta dy.\label{eq:averageIntermed}
		\end{align}
		Letting $\kappa=(r_0^n-x^n)^{\frac{1}{n}}$ and $\rho=\frac{y}{\kappa}$, then we have
		\begin{align}
			\int_0^{(r_0^n-x^n)^{\frac{1}{n}}} (r_0^n-x^n-y^n)^\beta dy&=\int_{0}^{\kappa}(\kappa^n-y^n)^\beta dy
			=\kappa^{n\beta}\int_{0}^{\kappa}\left(1-\left(\frac{y}{\kappa}\right)^n\right)^\beta dy \ . \\
			&=\kappa^{n\beta+1}\int_{0}^{1}\left(1-\rho^n\right)^\beta d\rho = (r_0^n-x^n)^{\beta+\frac{1}{n}} \frac{\pi}{c_{\beta,n}}.
		\end{align}
		Plugging this back into \eqref{eq:averageIntermed} gives the desired conclusion.
	\end{proof}
	
	\subsection{$y$-invariant Quasimodes}
	We provide a summary of the quasimode construction in \cite{Kleinhenz2019}, where the damping depends only on $x$. We recall these details because we will take advantage of the separation of variables structure of these quasimodes, as well as their form on the damped set $\{|x| \leq r_0\}$. We roughly follow the exposition of \cite[Section 7.1]{Sun23}. The general strategy was first introduced in \cite[Appendix B]{AL14}.
	\begin{proposition}\label{p:oldquasi}
		Fix $\gamma>0$ and $0<\e<r_0$. Suppose $\ti{W}(x)=(r_0-|x|)^{\gamma}_+$ for $|x| > r_0-\e$ and $\ti{W}(x)\geq c>0$ for $|x| \leq r_0-\e$. Consider $F(x;\theta) \in H^1(\Rb_+)$ solving 
		\begin{equation} \label{eq:Fdef}
			\begin{cases}
				-F''+ix^{\gamma} F - \theta F =0, & x>0 \\
				F'(0)=1.
			\end{cases}
		\end{equation}
		Let $\chi(x) \in C^{\infty}([-\pi,\pi];[0,1])$ be even and satisfy $\chi(x)=0$ for $|x|<r_0-\e$ and $\chi(x)=1$ for $|x|>r_0-\frac{\e}{2}$.  For $k \in \Nb$, let 
        \begin{equation}
            h=\frac{\pi-r_0}{\sqrt{(\pi-r_0)^2k^2 +\frac{\pi^2}{4}}}, \qquad \rho_h=h^{\frac{1}{\gamma+2}}.
        \end{equation}
		Then for $k$ sufficiently large, there exist $\alpha_h = -\frac{\pi}{2(\pi-r_0)}+O(1)$, and $\mu_h=\frac{\pi h}{2(\pi-r_0)} + O(h \rho_h)$ such that the functions
		\begin{equation}
			u_h(x,y)= e^{iky} v_h(x) = e^{iky} \left[ \cos\left(\frac{\mu_h}{h}(\pi-|x|) \right) \mathbbm{1}_{r_0 < |x|<\pi} + \rho_h \alpha_h \chi(x) F\left( \frac{r_0-|x|}{\rho_h}; h^{\frac{-2(\gamma+1)}{\gamma+2}} \mu_h^2 \right) \mathbbm{1}_{|x| \leq r_0}  \right],
		\end{equation}
		satisfy $\ltwo{u_h} \approx 1$, $u_h \in H^2(\Tb^2)$, and are quasimodes satisfying 
		\begin{equation}
			(-h^2 \Delta + i h \ti{W} - 1)u_h = O_{L^2}(h^2 \rho_h). \label{eq:1dQuasimode}
		\end{equation}
	\end{proposition}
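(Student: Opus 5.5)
We will verify the quasimode property by separation of variables, following the one-dimensional construction of \cite{Kleinhenz2019} (see also \cite[Section 7.1]{Sun23}). Since $u_h = e^{iky}v_h(x)$, we have
\[
(-h^2\Delta + ih\ti W - 1)u_h = e^{iky}\bigl(-h^2 v_h'' + (h^2k^2-1)v_h + ih\ti W v_h\bigr),
\]
so the claim reduces to a one-dimensional estimate for $v_h$. The choice of $h$ gives
\[
h^2k^2 - 1 = -\frac{\pi^2 h^2}{4(\pi-r_0)^2}.
\]
With $\mu_h$ of order $h$, the operator becomes $P_h v := -h^2 v'' + ih\ti W v - \mu_h^2 v$, up to an error of size $O(h^2\rho_h)$ absorbed into the choice of $\mu_h$. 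By evenness it suffices to work on $x\in[0,\pi]$.

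On $r_0<x<\pi$ we have $\ti W = 0$, so the cosine solves $-h^2v''-\mu_h^2 v=0$ exactly. It is also smooth across $x=\pi$ (where $\pi-|x|$ has a corner), because the cosine is even in $\pi-|x|$.

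On $x<r_0$ we use the rescaled variable $s=(r_0-x)/\rho_h$. For $x$ near $r_0$ we have $\ti W = (r_0-x)^\gamma = \rho_h^\gamma s^\gamma$. Since $\rho_h^{\gamma+2}=h$, the identity
\[
-h^2\partial_x^2 + ih\rho_h^\gamma s^\gamma - \mu_h^2 = h^2\rho_h^{-2}\bigl(-\partial_s^2 + is^\gamma - h^{-2}\rho_h^2\mu_h^2\bigr)
\]
holds. The spectral parameter $h^{-2}\rho_h^2\mu_h^2 = h^{\frac{-2(\gamma+1)}{\gamma+2}}\mu_h^2$ is $O(\rho_h^2)$, hence small. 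Therefore $F(s;\theta)$ is an exact solution wherever $\chi\equiv1$ and $\ti W=(r_0-x)^\gamma$.

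The only error comes from the commutator $[h^2\partial_x^2,\chi]$, which is supported where $r_0-\e<x<r_0-\e/2$, i.e.\ where $s\gtrsim \e/\rho_h$. There $F$ decays like $\exp(-c s^{(\gamma+2)/2})$ (Airy/WKB-type decay from the $is^\gamma$ potential), so the error is $O(h^\infty)$. Note that $\chi$ has been inserted only for this cutoff, and $\ti W\ge c$ on its support.

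The main step is matching at $x=r_0$ so that $v_h\in H^2$, which requires continuity of $v_h$ and $v_h'$. The derivative from the left is $\rho_h\alpha_h\cdot(-F'(0)/\rho_h)\cdot(-1)=\alpha_h$. The derivative from the right is $\frac{\mu_h}{h}\sin(\frac{\mu_h}{h}(\pi-r_0))$. Values must match as well:
\[
\cos\Bigl(\tfrac{\mu_h}{h}(\pi-r_0)\Bigr) = \rho_h\alpha_h F(0;\theta).
\]
For small $\theta$, $F(0;\theta)=F(0;0)+O(\theta)$ is a fixed nonzero constant by the $H^1(\Rb_+)$ theory of \eqref{eq:Fdef}. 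Setting $\tau=\mu_h(\pi-r_0)/h$, the two matching conditions reduce to the scalar equation
\[
\cos\tau = \rho_h\,\frac{\tau}{\pi-r_0}\,\sin\tau\,F(0;\theta(\tau)).
\]
We solve this by the implicit function theorem or Rouch\'e (allowing complex $\mu_h$ if needed) near $\tau=\pi/2$, which gives $\tau=\pi/2+O(\rho_h)$. This yields $\mu_h=\frac{\pi h}{2(\pi-r_0)}+O(h\rho_h)$ and $\alpha_h=\frac{\mu_h}{h}\sin\tau$. The latter has size $\frac{\pi}{2(\pi-r_0)}+O(1)$, with sign fixed by orientation. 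The residual mismatch between $\mu_h^2$ and $h^2(1-h^2k^2)$ is $O(h^2\rho_h)$, and multiplied by $\|v_h\|$ it gives exactly the $O_{L^2}(h^2\rho_h)$ error.

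Finally, $\|u_h\|_{L^2}\approx1$ because the cosine part has norm comparable to $1$ on an interval of fixed length. The $F$-part contributes $\rho_h^2\cdot\rho_h\|F\|_{L^2}^2=o(1)$.

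I expect the main obstacle to be the matching step, specifically controlling the dependence of $F(0;\theta)$ on $\theta$ uniformly. The first thing to try is a perturbative bound for the resolvent of $-\partial_s^2+is^\gamma$ on $\Rb_+$ with the Neumann condition.
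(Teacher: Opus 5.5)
Your proposal is correct and follows the paper's argument: separation of variables, the exact cosine on $r_0<|x|<\pi$, and the rescaled $F$ times $\chi$ on the damped side (you control the cutoff error via decay of $F$, where the paper cites \cite[Lemma 3.1]{Kleinhenz2019}); it then uses uniform control of $F(0;\theta)$ for small $\theta$ and solves the two matching conditions near $\tau=\pi/2$ by the implicit function theorem, which you reduce to one scalar equation by eliminating $\alpha_h$. One slip to fix: since $\frac{d}{dx}F\bigl(\frac{r_0-x}{\rho_h}\bigr)=-\rho_h^{-1}F'\bigl(\frac{r_0-x}{\rho_h}\bigr)$, the left derivative at $r_0$ is $-\alpha_h$, so the scalar equation is $\cos\tau=-\rho_h\frac{\tau}{\pi-r_0}\sin\tau\,F(0;\theta)$ and $\alpha_h=-\frac{\mu_h}{h}\sin\tau\to-\frac{\pi}{2(\pi-r_0)}$; with your sign the derivatives would not match, so it is not a matter of orientation (also, the final residual should compare $\mu_h^2$ with $1-h^2k^2$, not $h^2(1-h^2k^2)$).
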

	\begin{proof}
		Plugging $u_h(x,y) =  e^{iky} v_h(x)$ into \eqref{eq:1dQuasimode}, it suffices to produce $v_h \in H^2(\Sb^1)$ satisfying 
		\begin{equation}
			(-h^2 \p_x^2 + i h \ti{W}- 1+h^2 k^2) v_h=  O_{L^2}(h^2 \rho_h), \quad \ltwo{v_h} \approx 1.
		\end{equation}
		Equivalently for $\mu_h=\frac{\pi h}{2(\pi-r_0)} +O(h\rho_h)$ it suffices to find quasimodes for
		\begin{equation}
			(-h^2 \p_x^2 + i h \ti{W} - \mu_h^2)v_h= O_{L^2}(h^2 \rho_h).
		\end{equation}
		To do so let $v_h(x)$ be even and defined separately where the damping is 0 or positive 
		\begin{equation}
			v_h(x) = \begin{cases}
				v_{h,l}(x), & 0 \leq x <r_0 \\
				v_{h,r}(x), &r_0 \leq x < \pi.
			\end{cases}
		\end{equation}
		Then $v_{h,r}=\cos(\frac{\mu_h}{h}(\pi-x))$ is the exact solution where $\ti{W}=0$. 
		To ensure the resulting function is $H^2$ we impose a compatibility condition at $x=r_0$
		\begin{equation}
			v_{h,l}(r_0)= \cos\left(\frac{\mu_h}{h}(\pi-r_0)\right), \qquad v_{h,l}'(r_0) = \frac{\mu_h}{h} \sin\left(\frac{\mu_h}{h}(\pi-r_0)\right).\label{eq:compat}
		\end{equation}    
		The left function must be a quasimode for
		\begin{equation}
			(-h^2 \p_x^2 + i h (r_0-|x|)_+^{\gamma}-\mu_{h}^2)v_{h,l} =O_{L^2}(h^2 \rho_h). 
		\end{equation}
		Now we set $v_{h,l}(x) = \rho_h \alpha_h \chi(x) F(\frac{r_0-x}{\rho_h}; h^{\frac{-2(\gamma+1)}{\gamma+2}} \mu_h^2)$, for $F$ solving \eqref{eq:Fdef} and where $O(1)=\alpha_h \in \C$ will be chosen to satisfy the compatibility conditions. Then computing directly and applying \cite[Lemma 3.1]{Kleinhenz2019} for $h$ sufficiently small, $v_{h,l}$ is an $O_{L^2}(h^2\rho_h)$ quasimode. Note that the cutoff $\chi$ is needed to ensure that $v_{h,l}$ satisfies the appropriate boundary condition at $x=0$ to permit an $H^2$ extension to $x<0$.
		
		It remains to check the compatibility conditions \eqref{eq:compat}.  Let $q_0$ be the smallest Neumann eigenvalue of $(-\p_x^2 +x^{\gamma})$ on $L^2(\Rb_+)$ and let $F_0=F(x;0)|_{x=0}$ be the Dirichlet trace at $\theta=0$. Then by \cite[Lemma 4.1]{Kleinhenz2019}, $q_0>0$. By Lax-Milgram, \cite[Lemma 4.2]{Kleinhenz2019} there exists a uniform $C_0>0$ such that for all $|\theta| \leq \frac{q_0}{2}$ the unique solution $F(x;\theta)$ of \eqref{eq:Fdef} satisfies 
		\begin{equation}
			\frac{1}{C_0} \leq |F(0;\theta)| \leq C_0.
		\end{equation}
		In particular $F_0=F(0;0) \neq 0$. Note that by taking $h$ small enough
        \begin{equation}\label{eq:thetasize}
             h^{\frac{-2(\gamma+1)}{\gamma+2}}\mu_h^2 = O(h^{\frac{2}{\gamma+2}}) \leq \frac{q_0}{4},
        \end{equation}
        so the $F$'s we work with satisfy these inequalities.        
        
        We make the ansatz for $\mu_h$ more specific,
		\begin{equation}
			\mu_h = \frac{\pi h}{2(\pi-r_0)} + h \rho_h \eta_h,
		\end{equation}
		where $\eta_h=O(1) \in \C$. Now using the definition of $v_{h,l}$ in terms of $F$ and applying a trigonometric identity, the compatibility conditions \eqref{eq:compat}become
		\begin{align}
			\alpha_h F(0;h^{\frac{-2(\gamma+1)}{\gamma+2}} \mu_h^2) &= -\rho_h^{-1} \sin((\pi-r_0)\rho_h\eta_h ) \label{eq:worsecompat1}\\
			-\alpha_h &= \left(\frac{\pi}{2(\pi-r_0)} + \rho_h \eta_h \right) \cos((\pi-r_0) \rho_h\eta_h ). \label{eq:worsecompat2}
		\end{align}
		Taking a Taylor expansion of $\sin$ and $\cos$ on the right hand side, the leading term of $\alpha_h$ should be $\alpha_0=-\frac{\pi}{2(\pi-r_0)}$ and the leading term of $\eta_h$ should be $\eta_0=\frac{\pi}{2(\pi-r_0)^2}F_0$. Using the implicit function theorem \cite[Lemmas 4.3, 4.4]{Kleinhenz2019}, for all $0 \leq h \ll 1$ there is a solution $(\alpha_h,\eta_h)$ to \eqref{eq:worsecompat1} and \eqref{eq:worsecompat2} with 
		\begin{equation}
			|(\alpha_h,\eta_h)-(\alpha_0,\eta_0)| = O(1).
		\end{equation}
        Finally, as we show in the following lemma, there is a uniform $C>0$ such that for $h$ small enough 
        \begin{equation}
            \ltwo{F(x; h^{\frac{-2(\gamma+1)}{\gamma+2}} \mu_h^2)} \leq C.
        \end{equation}
        Therefore by a direct computation $\ltwo{v_h} \approx \ltwo{u_h} \approx 1$.
        Thus the $u_h$ as defined in the statement of the Lemma are the desired $O_{L^2}(h^2\rho_h)$ quasimodes.
	\end{proof}
	We now prove a uniform bound on the $H^1$ norm of solutions of \eqref{eq:Fdef}. Our argument follows that of \cite[Lemma 7.2]{Sun23}.
	\begin{lemma}\label{l:Funiform}
	   Let $q_0$ be the smallest Neumann eigenvalue of $(-\p_x^2+x^{\gamma})$ on $L^2(\Rb_+)$. Then there exists $C>0$ such that for all $|\theta| \leq \frac{q_0}{4}$ the solution $F(x; \theta)$ of \eqref{eq:Fdef} satisfies $\|F\|_{H^1(\Rb_+)} \leq C$.
       
        Furthermore if $\gamma \geq 1$, then $\ltworp{x^{\frac{\gamma}{2}}\p_x F} \leq C$
	\end{lemma}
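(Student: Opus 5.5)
We will argue by energy estimates on the equation $-F''+ix^{\gamma}F-\theta F=0$ with $F'(0)=1$, multiplying by $\bar F$ and by suitable weighted multiples, and integrating over $\Rb_+$. Decay of $F$ at infinity (since $F\in H^1$ and the potential $ix^\gamma$ forces rapid decay) justifies discarding boundary terms at $+\infty$. Multiplying by $\bar F$ and integrating by parts gives
\begin{equation}
\|F'\|^2_{L^2(\Rb_+)} - F'(0)\overline{F(0)}\cdot(-1) \text{ terms} + i\|x^{\gamma/2}F\|^2_{L^2(\Rb_+)} - \theta\|F\|^2_{L^2(\Rb_+)}=0,
\end{equation}
more precisely $\|F'\|^2 + \overline{F(0)} + i\|x^{\gamma/2}F\|^2 - \theta\|F\|^2 = 0$, using $F'(0)=1$. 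Taking real and imaginary parts yields
\begin{equation}
\|F'\|^2 \leq |F(0)| + \Re\theta\,\|F\|^2 + |\Im \theta|\,\|F\|^2,\qquad \|x^{\gamma/2}F\|^2 \leq |F(0)| + |\Im\theta|\,\|F\|^2 .
\end{equation}
Adding these, we get $\|F'\|^2+\|x^{\gamma/2}F\|^2 \leq 2|F(0)| + 2|\theta|\|F\|^2$ (the sum of real and imaginary parts controls the Neumann quadratic form of $-\p_x^2+x^\gamma$).

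The key step is the Neumann spectral bound: the quadratic form satisfies $\|F'\|^2+\|x^{\gamma/2}F\|^2 \geq q_0\|F\|^2$ for all $F$ in the form domain. Combining this with $|\theta|\leq q_0/4$, we get $q_0\|F\|^2 \leq 2|F(0)| + \frac{q_0}{2}\|F\|^2$, so $\|F\|^2 \leq 4|F(0)|/q_0$. Then we need to control $|F(0)|$. We can use the uniform bound $|F(0;\theta)|\leq C_0$ from \cite[Lemma 4.2]{Kleinhenz2019}, quoted in the proof of Proposition \ref{p:oldquasi} (stated there for $|\theta|\le q_0/2$). Alternatively, we can argue self-containedly via the trace inequality $|F(0)|^2\leq 2\|F\|\|F'\|$, which with the above gives $|F(0)|\lesssim |F(0)|$-type absorption. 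Specifically, $X:=\|F\|_{H^1}^2\lesssim |F(0)|\lesssim X^{1/2}$, hence $X\lesssim 1$. Either way $\|F\|_{H^1}\leq C$ uniformly. I expect this absorption and the use of $q_0$ to handle possibly positive $\Re\theta$ to be the main point; the rest is bookkeeping.

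For the weighted estimate when $\gamma\geq1$, we multiply the equation by $x^{\gamma}\bar F'$, or more cleanly by $-(x^\gamma \bar F')'$, i.e. we pair $F''$-equation with $x^\gamma \overline{F''}$. The simplest route is to differentiate: $G=F'$ satisfies $-G''+ix^\gamma G+i\gamma x^{\gamma-1}F-\theta G=0$ with $G(0)=1$. Pairing with $\bar G$ gives
\begin{equation}
\|G'\|^2 - G'(0)\overline{G(0)} + i\|x^{\gamma/2}G\|^2 + i\gamma\langle x^{\gamma-1}F,G\rangle - \theta\|G\|^2=0 .
\end{equation}
Here $G'(0)=F''(0)=-\theta F(0)$ is bounded, $\|G\|$ is bounded by the first part, and the imaginary part gives $\|x^{\gamma/2}F'\|^2\leq C+\gamma|\langle x^{\gamma-1}F,F'\rangle|+|\theta|\|F'\|^2$. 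For $\gamma\ge1$ we bound $x^{\gamma-1}\leq 1+x^{\gamma}$, and write $|\langle x^{\gamma}F,F'\rangle|\leq\|x^{\gamma/2}F\|\|x^{\gamma/2}F'\|$. Absorbing via Cauchy–Schwarz, with $\|x^{\gamma/2}F\|$ bounded from the first part, yields $\ltworp{x^{\gamma/2}\p_xF}\leq C$. The restriction $\gamma\geq1$ is used exactly to avoid the singular weight $x^{\gamma-1}$ near $0$.
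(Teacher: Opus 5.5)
Your proof is correct. Its skeleton matches the paper's: pair with the solution, take real and imaginary parts and add, use the Neumann coercivity $\|F'\|^2+\|x^{\gamma/2}F\|^2\ge q_0\|F\|^2$ with $|\theta|\le q_0/4$ to absorb the $\theta$ terms, then differentiate the equation, take the imaginary part, and split $x^{\gamma-1}\le 1+x^{\gamma}$ for $\gamma\ge 1$.

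The genuine difference is how you handle the inhomogeneous Neumann datum $F'(0)=1$.

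\emph{The paper's route.} It subtracts a lift $J\in C_c^\infty$ with $J(0)=0$, $J'(0)=1$. Then $\widetilde F=F-J$ has homogeneous Neumann data and solves the equation with forcing $G=J''-ix^{\gamma}J+\theta J$. All boundary terms vanish, including for $Q=\partial_x\widetilde F$ since $Q(0)=0$. This gives bounds linear in $\|G\|$ and $\|\partial_x G\|$, which are uniform for $|\theta|\le q_0/4$.

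\emph{Your route.} You keep the boundary terms: $\overline{F(0)}$ in the first identity, and $G'(0)\overline{G(0)}=F''(0)=-\theta F(0)$ in the differentiated one. You reduce to $\|F\|_{H^1}^2\lesssim |F(0)|$ and close with the trace inequality $|F(0)|^2\le 2\|F\|\|F'\|$. This self-improving absorption step is legitimate because $F\in H^1$ is given, so $\|F\|_{H^1}<\infty$ a priori.

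\emph{Comparison.} Your version avoids the auxiliary cutoff and the forcing terms. It also yields, for free, a uniform upper bound on the Dirichlet trace $|F(0;\theta)|$, though not the lower bound $|F(0;\theta)|\ge C_0^{-1}$ used elsewhere in the paper. The paper's lifting is the more standard device: it gives estimates linear in the data and puts the problem directly in the Neumann setting where $q_0$ is the natural constant. As in the paper, your final absorption for $\|x^{\gamma/2}F'\|$ tacitly uses that this quantity is finite a priori.

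Some harmless slips to clean up:
\begin{itemize}
\item Pairing $-G''$ with $\overline{G}$ produces $+G'(0)\overline{G(0)}$, not $-G'(0)\overline{G(0)}$. This is consistent with the $+\overline{F(0)}$ you correctly obtained in the first identity, and it is irrelevant since you only use absolute values.
\item Your real-part inequality carries a superfluous $|\Im\theta|\,\|F\|^2$. The identity actually gives $\|F'\|^2\le |F(0)|+|\Re\theta|\,\|F\|^2$, and that sharper form is what yields the $2|\theta|\,\|F\|^2$ in your sum. Even the weaker $3|\theta|\,\|F\|^2$ would still be absorbable, since $3q_0/4<q_0$.
\item The first display, with ``$\cdot(-1)$ terms,'' is garbled. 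The ``more precisely'' identity that follows is the correct one.
\end{itemize}
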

	\begin{proof}
		Take $J \in C^{\infty}_c(\Rb_+)$ such that $J(0)=0$ and $J'(0)=1$, then let $\ti{F}=F-J$. Therefore 
		\begin{equation}
			-\ti{F}'' + i x^{\gamma} \ti{F} - \theta \ti{F}=G,  \qquad G:=J'' - i x^{\gamma}J + \theta J.
		\end{equation}
		Pairing this equation with $\ti{F}$, integrating by parts, and then taking the real and imaginary parts we have 
		\begin{align}
			&\ltworp{\p_x \ti F}^2 \leq  |\Re \theta| \ltworp{\ti{F}} ^2+ \ltworp{G} \ltworp{\ti{F}} \\
			&\ltworp{x^{\frac{\gamma}{2}} \ti{F}}^2 \leq |\Im \theta| \ltworp{\ti{F}}^2 + \ltworp{G} \ltworp{\ti F}. \label{eq:Funiformintermed1}
		\end{align}
		Now since $-\p_x^2+x^{\gamma}-q_0 \geq 0$ and $|\theta| \leq \frac{q_0}{4}$, adding the two above inequalities we have 
		\begin{equation}
			q_0 \ltworp{\ti{F}}^2 \leq \ltworp{\p_x \ti{F}}^2 + \ltworp{x^{\frac{\gamma}{2}}\ti{F}}^2 \leq \frac{q_0}{2} \ltworp{\ti{F}}^2 + 2 \ltworp{G}\ltworp{\ti{F}}.
		\end{equation}
		Then applying Young's inequality, for some $C>0$ we have 
		\begin{equation}
			\ltworp{\ti{F}} + \ltworp{\p_x \ti{F}} \leq C \ltworp{G}\label{eq:Funiforminterm}.
		\end{equation}
		For $|\theta|\leq \frac{q_0}{4}$ there is uniform control over the size of  $\ltworp{G}$, which gives the desired $H^1$ bound.

        Now when $\gamma \geq 1$, let $Q=\p_x \ti{F}$ and note $Q$ solves
        \begin{equation}
            -Q''+ix^{\gamma} Q +i \gamma x^{\gamma-1}\ti{F} - \theta Q = \p_x G.
        \end{equation}
        Pairing this equation with $Q$, integrating by parts, taking the imaginary part, and computing directly we have 
        \begin{align}
            \ltworp{x^{\frac{\gamma}{2}}Q}^2 \leq &|\Im \theta| \ltworp{Q}^2 + \ltworp{\p_x G}\ltworp{Q}\\
            &+ \gamma \left(\ltworp{x^{\frac{\gamma}{2}}\ti{F}}\ltworp{x^{\frac{\gamma}{2}} Q} +\ltworp{\ti{F}} \ltworp{Q}\right).
        \end{align}
        Then using Young's inequality for products, recalling $Q=\p_x \ti{F}$, applying \eqref{eq:Funiformintermed1} and \eqref{eq:Funiforminterm}, and using that $|\theta| \leq \frac{q_0}{4}$, for some $C>0$ we have
        \begin{equation}
            \ltworp{x^{\frac{\gamma}{2}} \p_x F} \leq C\left( \ltworp{G} + \ltworp{\p_x G} \right).
        \end{equation}
        Since $|\theta| \leq \frac{q_0}{4}$ there is uniform control over the size of $\ltworp{\p_x G}$, which gives the desired bound.
	\end{proof}
	
	\subsection{New quasimodes}
	Now we show that we can modify $u_h$, the sequence of quasimodes with damping $A(W)(x)$, to produce a sequence of quasimodes of the same order with damping $W(x,y)$.
    Let $W$ satisfy \eqref{eq:Wdef} and \eqref{eq:avgBelow}. Then by Lemma \ref{l:avgdamp},
    \begin{align}
        &A(W)(x) = (r_0-|x|)_+^{\beta+\frac{1}{n}}, &\text{for } |x| > r_0-\e \\
        &A(W)(x) \geq c>0, &\text{for } |x| \leq r_0-\e.
    \end{align}
    Thus we may apply Proposition \ref{p:oldquasi} with $\ti{W}(x)=A(W)(x)$, and $\gamma=\beta+\frac{1}{n}$.
	\begin{proposition}\label{p:quasimode}
		Consider $W$ satisfying Assumption \ref{a:new}, \eqref{eq:Wdef} with $\beta \geq 4, n\geq 1$, and \eqref{eq:avgBelow}.
		Suppose $u_h(x,y)=e^{iky} v_h(x)$ is the sequence of quasimodes from Proposition \ref{p:oldquasi} for $\ti{W}(x)=A(W)(x)$. Recall that $h=\frac{\pi-r_0}{\sqrt{(\pi-r_0)^2k^2+\frac{\pi^2}{4}}}$ and $\rho_h=h^{\frac{1}{\beta+\frac{1}{n}+2}}$.
		Let 
		\begin{equation}
			g(x,y) = \frac{1}{2}\int_{-\pi}^y \left( W(x,y') - A(W)(x) \right) dy',
		\end{equation}
		and define 
        \begin{equation}
            w_h(x,y) = e^{g(x,y)} u_h(x,y)= e^{g(x,y)} e^{iky} v_h(x).
        \end{equation}
        Then $w_h \in H^2(\Tb^2)$ and $\ltwo{w_h} \approx 1$. Furthermore, we have 
		\begin{equation}
			\ltwo{(-h^2\Delta+ihW-1) w_h} = O(h^2 \rho_h).
		\end{equation}
	\end{proposition}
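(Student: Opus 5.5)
The plan is a conjugation computation followed by term-by-term estimates. Using that $w_h$ is built from the quasimode $u_h$ and that $\p_y u_h = ik u_h$, I will compute
\begin{equation}
-h^2\Delta w_h = e^{g}\Big[-h^2\Delta u_h - 2h^2 \p_x g\,\p_x u_h - 2ih^2k\,\p_y g\, u_h - h^2\big(\Delta g + |\nabla g|^2\big)u_h\Big].
\end{equation}
Since $\p_y g = \frac12 (W - A(W))$, the third term is $-ih^2k(W-A(W))u_h$. From the definition of $h$ we have $hk = 1 + O(h^2)$, so this term equals $-ih(W-A(W))u_h + O(h^3)$. This cancels the discrepancy between $ihW$ and $ihA(W)$, and I expect
\begin{equation}
(-h^2\Delta+ihW-1)w_h = e^{g}\Big[(-h^2\Delta + ihA(W)-1)u_h + O(h^3)u_h - 2h^2\p_x g\,\p_x u_h - h^2(\Delta g+|\nabla g|^2)u_h\Big].
\end{equation}
The first bracketed term is $O_{L^2}(h^2\rho_h)$ by Proposition \ref{p:oldquasi} with $\gamma=\beta+\frac1n$.

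\textbf{Preliminary properties of $g$.}
\begin{enumerate}
\item $g$ is $2\pi$-periodic in $y$, because $\int_{-\pi}^{\pi}(W-A(W))\,dy=0$ by definition of the average.
\item $g$ is bounded, since $W$ is bounded. Hence $e^{\pm g}$ is bounded, which gives $\ltwo{w_h}\approx\ltwo{u_h}\approx 1$.
\item Assumption \ref{a:new} gives bounds on all second derivatives. We have $\p_y^2 g = \frac12\p_yW$, $\p_x\p_y g=\frac12(\p_xW-\p_xA(W))$, and $\p_x^2 g=\frac12\int(\p_x^2W-\p_x^2A(W))$; all are bounded, since differentiating under the integral is allowed. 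This gives $w_h\in H^2(\Tb^2)$.
\item $\nabla g$ and $\Delta g$ are supported in $|x|\le r_0$.
\end{enumerate}

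\textbf{The zeroth-order error terms.} On $|x|\le r_0$ we have $v_h=\rho_h\alpha_h\chi F(\frac{r_0-|x|}{\rho_h};\cdot)$. Rescaling and using Lemma \ref{l:Funiform} gives
\begin{equation}
\|v_h\|_{L^2(|x|\le r_0)} \lesssim \rho_h\cdot\rho_h^{1/2}\|F\|_{L^2(\Rb_+)} = O(\rho_h^{3/2}).
\end{equation}
Hence $h^2(\Delta g+|\nabla g|^2)u_h = O(h^2\rho_h^{3/2})$, which is acceptable.

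\textbf{The main obstacle: the cross term $h^2\p_x g\,\p_x u_h$.} Here $\p_x v_h = \mp\alpha_h\chi F'(\cdot) + O(\rho_h)\chi' F$, so only $\|\p_x v_h\|_{L^2(|x|\le r_0)}=O(\rho_h^{1/2})$, which is not enough on its own. I will exploit the vanishing of $\p_x g$ at $|x|=r_0$. For $r_0-\e<|x|<r_0$, Assumption \ref{a:new} gives $|\p_xW|\le CW^{1/2}\lesssim (r_0-|x|)^{\beta/2}$. This bound holds on a $y$-interval of length $\sim (r_0-|x|)^{1/n}$, outside of which $W=0$. Also $\p_xA(W)$ is explicit from Lemma \ref{l:avgdamp}. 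Together these give
\begin{equation}
|\p_x g|\lesssim (r_0-|x|)^{\frac{\beta}{2}+\frac1n}+(r_0-|x|)^{\gamma-1}\lesssim (r_0-|x|)^{\gamma/2},
\end{equation}
using $\beta\ge4$. The factor $e^{g}$ in front is bounded, and $\p_x g$ is bounded away from $|x|=r_0$ (where $\chi$ cuts off). After the change of variables $s=(r_0-|x|)/\rho_h$, this yields
\begin{equation}
h^2\|\p_x g\,\p_x v_h\|\lesssim h^2\rho_h^{\frac{\gamma+1}{2}}\ltworp{s^{\gamma/2}F'} + h^2\rho_h\|F\|.
\end{equation}
The first term is bounded via the second part of Lemma \ref{l:Funiform}, valid since $\gamma\ge1$. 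Both terms are $O(h^2\rho_h)$.

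Collecting the terms gives $(-h^2\Delta+ihW-1)w_h=O_{L^2}(h^2\rho_h)$. The step needing most care is this weighted bound on $\p_x g$ near the boundary, including the region $|x|\le r_0-\e$ where only boundedness is used.
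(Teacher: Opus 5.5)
Your proof is correct, and its skeleton is the paper's. It uses the same conjugation by $e^{g}$, the same cancellation of $ih(W-A(W))u_h$ against the $\p_y g\,\p_y u_h$ term via $hk=1+O(h^2)$, and the same handling of the cross term $h^2\p_x g\,\p_x u_h$ through a pointwise bound $|\p_x g|\lesssim (r_0-|x|)^{\gamma/2}$ and the weighted $F'$ estimate of Lemma \ref{l:Funiform}.

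The genuine difference is in the zeroth-order terms $h^2(\Delta g+|\nabla g|^2)u_h$. The paper bounds $\p_x^2 g$, $\p_y^2 g$ and $(\p_x g)^2$ pointwise by $A(W)^{1/2}$ or $W^{1/2}$, using Lemma \ref{l:avgDBC} and Assumption \ref{a:new}. It then takes the imaginary part of the pairing of the one-dimensional quasimode equation with $u_h$. This gives $\ltwo{W^{1/2}u_h}\lesssim\ltwo{A(W)^{1/2}u_h}=O(h^{1/2}\rho_h^{1/2})$ and an error $O(h^{5/2}\rho_h^{1/2})$. You use only $g\in W^{2,\infty}$ with $g$ supported in $\{|x|\le r_0\}$, plus the explicit profile of $u_h$ there. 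That yields $\|u_h\|_{L^2(\{|x|\le r_0\})}=O(\rho_h^{3/2})$ and the weaker but sufficient error $O(h^2\rho_h^{3/2})$. Your route is more elementary: no energy identity and no weighted bounds on second derivatives of $g$. The paper's route uses only the quasimode property for these terms and gives a much sharper bound, although both proofs need the explicit profile for the cross term anyway.

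Similarly, you derive $|\p_x g|\lesssim(r_0-|x|)^{\gamma/2}$ by hand from \eqref{eq:Wdef}. You use $|\p_x W|\lesssim (r_0-|x|)^{\beta/2}$ on a $y$-interval of length $\sim (r_0-|x|)^{1/n}$, and $\gamma\ge 2$ for the $\p_x A(W)$ part. The paper instead obtains $|\p_x g|\lesssim A(W)^{1/2}$ from Assumption \ref{a:new} alone by Jensen's inequality.

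Two small remarks. Your check that $g$ is $2\pi$-periodic in $y$ makes explicit a point the paper leaves implicit. Also, the region $|x|\le r_0-\e$ needs no separate treatment in the cross term, since $\chi$, and hence $v_h$, vanishes there.
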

	\begin{proof}
		Since $\ltwo{u_h} \approx 1$, and $g$ is bounded above and below due to the boundedness of $W$ and $A(W)$, we have $\ltwo{w_h} \approx 1$. Note that because $W$ satisfies Assumption \ref{a:new}, $g \in W^{2,\infty}(\Tb^2)$. This along with $u_h \in H^2(\Tb^2)$ means that $w_h \in H^2(\Tb^2).$
		
		Now we compute directly, using that $u_h$ satisfy \eqref{eq:1dQuasimode}, and the boundedness of $e^g$ to see
		\begin{align}
			(-h^2\Delta + i h W-1) e^g u_h &= [-h^2\Delta, e^g]u_h  +e^g(-h^2\Delta+ihA(W)-1) u_h \\
			&\qquad+ ihe^g(W-A(W)) u_h \\
			&= [-h^2\Delta, e^g]u_h + ih e^g(W-A(W))u_h +O_{L^2}(h^2 \rho_h). \label{eq:comm1}
		\end{align}
		Then computing the commutator, writing $D_z=-i\p_z$, we have
		\begin{equation}
			[-h^2\Delta, e^g]= h^2 (D_x^2 e^g)+ h^2 (D_y^2 e^g) + 2h (D_x e^g) hD_x + 2h (D_y e^g) hD_y. \label{eq:comm2}
		\end{equation}
		Computing directly, using that $u_h(x,y)=e^{iky} v_h(x)$ and $1-h^2k^2=(1-hk)(1+hk) =O(h^2)$, we have
		\begin{equation}
			2h(D_y e^g)h D_y u_h +i h e^g (W-A(W)) u_h = (1-hk) i h e^g (W-A(W)) u_h = O_{L^2}(h^3).
		\end{equation}
		Combining this with \eqref{eq:comm1} and \eqref{eq:comm2}, and using that $\rho_h = h^{\frac{1}{\beta+\frac{1}{n}+2}} \geq h$, we have
		\begin{equation} \label{eq:whIntermediate}
			(-h^2\Delta + i h W-1) w_h = (h^2(D_x^2 e^g)+ h^2 (D_y^2 e^g) + 2h (D_x e^g) h D_x ) u_h + O_{L^2}(h^2 \rho_h). 
		\end{equation}
		We now claim that 
		\begin{equation}\label{eq:whIntermediate1}
			h^2 \ltwo{(D_x^2 e^g) u_h}+ h^2 \ltwo{(D_y^2 e^g) u_h}+ h \ltwo{(D_x e^g) hD_x u_h}  = O(h^2 \rho_h).
		\end{equation}
		If this claim is true, then applying it to \eqref{eq:whIntermediate} immediately shows that $w_h$ are the desired quasimodes, so it remains to prove \eqref{eq:whIntermediate1}.
		Before doing so we prove two preliminary lemmas.
		
		First we show how averaging interacts with our derivative conditions.
		\begin{lemma}\label{l:avgDBC}
			Suppose $f$ satisfies Assumption \ref{a:new}
			\begin{enumerate}
				\item Then $A(f)$ satisfies Assumption \ref{a:new}.
				\item There exists $C>0$ such that for $0 \leq j \leq 2$ we have 
				\begin{equation}
					\left|\p_x^j \int_{-\pi}^y f(x,y') - A(f)(x) dy' \right| \leq C(A(f)(x))^{\frac{1}{2}}.
				\end{equation}
			\end{enumerate}  
		\end{lemma}
		\begin{proof}
			1) Since $A(f)(x)$ is $y$-invariant we only consider $x$ derivatives. Applying Assumption \ref{a:new}, then since $g(z)=z^{\frac{1}{2}}$ is concave, applying Jensen's inequality we have for $j=1,2$
			\begin{align}
				|\p_x^{j} A(f)(x)| &= \left| \frac{1}{2\pi}\int_{-\pi}^{\pi} \p_x^{j} f(x,s) ds \right| \leq C\frac{1}{2\pi}\int_{-\pi}^{\pi} f^{\frac{1}{2}}(x,s) ds \\
				&\leq C\left( \frac{1}{2\pi} \int_{-\pi}^{\pi} f(x,s) ds \right)^{\frac{1}{2}}= C A(f)(x)^{\frac{1}{2}}.
			\end{align}
			2) When $j=0$ the bound is immediate from the triangle inequality. For $j=1,2$ taking derivatives, and applying Assumption \ref{a:new} and part 1 we have 
			\begin{align}
				\left| \int_{-\pi}^y \p_x^j f(x,y') - \p_x^j A(f)(x)  dy' \right|
				 \leq C A(f)^{\frac{1}{2}}(x) + 2\pi |\p_x^j A(f)(x)| \leq C (A(f)(x))^{\frac{1}{2}}.
			\end{align}
		\end{proof}
		Now we prove a preliminary lemma that allows us to control terms involving quasimodes.
		\begin{lemma}
			\begin{align}
				%&\ltwo{(\p_x A(W)) \p_x u_h} = O(\rho_h), \label{l:dxuhEstimate1}\\
				&\ltwo{\left( \p_x \int_{-\pi}^y (W(x,y') - A(W)(x)) dy' \right) \p_x u_h } = O(\rho_h) \label{eq:dxuhEstimate2},\\
				&\ltwo{A(W)^{\frac{1}{2}} u_h} + \ltwo{W^{\frac{1}{2}} u_h} = O(h^{\frac{1}{2}} \rho_h^{\frac{1}{2}}).\label{eq:a0westimate} 
			\end{align}
		\end{lemma}
		\begin{proof}
			Recall from the quasimode construction that there exists $F$ solving \eqref{eq:Fdef} such that for $x \in \{A(W)(x) > 0\}$ and $\gamma = \beta+\frac{1}{n}$, we have
			\begin{equation} \label{eq:quasiConstruct}
				u_h(x,y)=  \rho_h \alpha_h \chi(x) e^{iky} F\left(\frac{r_0-|x|}{\rho_h}; h^{\frac{-2(\gamma+1)}{\gamma+2}} \mu_h^2 \right).
			\end{equation}
			% Furthermore by \eqref{eq:thetasize} and Lemma \ref{l:Funiform} there exists $C>0$ such that for $h>0$ small enough
   %          \begin{equation}\label{eq:Fbound}
   %              \ltworp{F} +\ltworp{z^{(\beta+\frac{1}{n})\frac{1}{2}} F'(z)} \leq C.
   %          \end{equation}			
			1) Since $W$ satisfies Assumption \ref{a:new}, by Lemma \ref{l:avgDBC} part 2 and \eqref{eq:quasiConstruct}, then applying Lemma \ref{l:Funiform}, making a change of variables $z=\frac{r_0-|x|}{\rho_h}$, and applying Lemma \ref{l:Funiform} again we have 
			\begin{align}
				\ltwo{ \left(\p_x \int_{-\pi}^y (W(x,y') - A(W)(x)) dy'\right) \p_x u_h} &\leq C \ltwo{(A(W)(x))^{\frac{1}{2}} \p_x u_h} \\
				&\leq C\ltwo{ (r_0-|x|)_+^{(\beta+\frac{1}{n})\frac{1}{2}} F'\left( \frac{r_0-|x|}{\rho_h}\right)} + O(\rho_h)  \\
				&=O\left( \rho_h^{(\beta+\frac{1}{n})\frac{1}{2}} \rho_h^{\frac{1}{2}} \right)+O(\rho_h) \leq O(\rho_h),
			\end{align}
			where the final inequality follows from $\beta \geq 4 \geq 2$.
			
			2) Pairing \eqref{eq:1dQuasimode} with $u_h$, integrating by parts and taking the imaginary part we obtain 
            \begin{equation}\label{eq:buestimate}
			\ltwo{A(W)^{\frac{1}{2}} u_h} = O(h^{\frac{1}{2}} \rho_h^{\frac{1}{2}}).
		      \end{equation}
            Now computing directly, since $|u_h(x,y)|=|e^{iky} v_h(x)|$, then applying \eqref{eq:buestimate} we have
			\begin{align}
				\ltwo{W^{\frac{1}{2}} u_h}
				& =  \left( \int_{-\pi}^{\pi} \int_{-\pi}^{\pi} W(x,y) |v_h(x)|^2 dx dy \right)^{\frac{1}{2}} \leq C \ltwo{A(W)^{\frac{1}{2}} u_h} = O(h^{\frac{1}{2}} \rho_h^{\frac{1}{2}}).
			\end{align}
			% 2) Pairing \eqref{eq:whIntermediate} with $w_h=e^g u_h$ and integrating by parts we have 
			% \begin{align}
				%     \ltwo{h\nabla w_h}^2 + i h \ltwo{W^{\frac{1}{2}} w_h}^2 - \ltwo{w_h}^2 
				%     &=h^2 \<(D_x^2e^g) u_h, e^g u_h\> 
				%     +h^2\<(D_y^2e^g)u_h,e^gu_h\> \\
				%      &\qquad+ 2h\<(D_x e^g) hD_x u_h, e^g u_h\> 
				%     +O_{L^2}(h^2 \rho_h).
				% \end{align}
			% Note that the $(D_x^2 e^g)$ and $(D_y^2 e^g)$ terms are purely real, so taking the imaginary part of both sides and dividing by $h$ we obtain 
			% \begin{equation}
				%  	\ltwo{W^{\frac{1}{2}} w_h}^2 \leq 2  |\<(D_x e^g) h D_x u_h, e^g u_h\>| + O_L^2(h \rho_h).
				% \end{equation}
			% Now computing $D_x e^g$ directly, then applying Cauchy-Schwarz and applying \eqref{eq:dxuhEstimate2} and that $\ltwo{u_h}=1$ we have 
			% \begin{align}
				%      \ltwo{W^{\frac{1}{2}} w_h}^2 &\leq Ch \left|\< \left(\int_{-\pi}^y \p_x W(x,y') -A(W)'(x) dy' \right) \p_x u_h, u_h\>\right| + O(h \rho_h) \\
				%     &\leq Ch \ltwo{\left(\int_{-\pi}^y \p_x W(x,y') -A(W)'(x) dy' \right) \p_x u_h}  \ltwo{u_h} + O(h \rho_h) \label{eq:a0whIntermed1} \\
				%     &= O(h \rho_h).
				% \end{align}
			% Taking square roots gives us the desired inequality.
		\end{proof}
		We can now estimate the terms in \eqref{eq:whIntermediate1}.
		
		1) Computing directly 
		\begin{equation}
			D_x^2 e^g = - e^g\left( \frac{1}{4} \left(\p_x \int_{-\pi}^y \left( W(x,y') -  A(W)(x) \right) dy' \right)^2 + \frac{1}{2} \p_x^2\int_{-\pi}^y  (W(x,y') -  A(W)(x)) dy' \right).
		\end{equation}
		Applying Lemma \ref{l:avgDBC} part 2, and using that $e^g$ is bounded and that $A(W)$ satisfies Assumption \ref{a:new} we have 
		\begin{equation}
			h^2 \ltwo{(D_x^2 e^g) u_h} \leq C h^2 \ltwo{A(W)^{\frac{1}{2}}u_h}.\label{eq:hdxeg}
		\end{equation}
		
		2) Computing directly, and using that $e^g$ is bounded,  then applying the triangle inequality and using that $W$ satisfies Assumption \ref{a:new} we have 
		\begin{align}
			h^2 \ltwo{(D_y^2 e^g) u_h} &\leq C h^2 \left( \ltwo{(W-A(W))^2 u_h} +  \ltwo{(\p_y W) u_h} \right) \\
			&\leq C h^2 \left( \ltwo{W^{\frac{1}{2}} u_h} + \ltwo{A(W)^{\frac{1}{2}} u_h} \right).\label{eq:hdyeg}
		\end{align}
		Combining \eqref{eq:hdxeg} and \eqref{eq:hdyeg}, applying \eqref{eq:a0westimate} and using that $\rho_h =  h^{\frac{1}{\beta+\frac{1}{n} +2}} \geq h$ we have 
		\begin{equation}
			h^2 \ltwo{(D_x^2 e^g u_h)} + h^2 \ltwo{(D_y^2 e^g) u_h} = O(h^2 h^{\frac{1}{2}} \rho_h^{\frac{1}{2}}) = O(h^2 \rho_h).
		\end{equation}
		3) Computing directly, then applying  \eqref{eq:dxuhEstimate2} we have
		\begin{align}
			\ltwo{h (D_x e^g) h D_x u_h} &\leq C h^2 \ltwo{\left(\p_x \int_{-\pi}^y  (W(x,y') - A(W)(x) ) dy' \right)\p_x u_h } = O(h^2 \rho_h).
		\end{align}
		Thus all of the terms in \eqref{eq:whIntermediate1} are $O(h^2 \rho_h)$, and by \eqref{eq:whIntermediate} the $w_h$ are the desired quasimodes. 
	\end{proof}
	
	\begin{remark}
		Note that in \cite[Section 7]{Sun23}, a similar approach is used to produce quasimodes $w_h$ from quasimodes $v_h$ and $u_h$. However in that case $e^g$ is replaced by $e^{G_h} \widetilde{\psi}(h^{\frac{1}{2}} \rho_h^{\frac{1}{2}} D_x)$ where 
		\begin{equation}
			G_h = \text{Op}_h^w\left( \frac{\psi_1(\eta)}{2\eta} \int_{-\pi}^y (W(x,y') -A(W)(x)) dy' \right),
		\end{equation}
		where $(\xi, \eta)$ are the frequency variables dual to $(x,y)$, $\psi_1(\eta)$ is a cutoff supported near $\eta=1$, and $\tilde{\psi}(\xi)$ is a cutoff supported near $\xi=0$. These additional frequency cutoffs are included to frequency localize the quasimodes to the $y$-direction. This is necessary in the proof of the resolvent estimate from that paper \cite[Section 5]{Sun23}, because the contradiction quasimodes are not known to be frequency localized. 
		However, the quasimodes constructed in \cite{Kleinhenz2019} are frequency localized in the $y-$direction. 
	\end{remark}

	\bibliographystyle{alpha}
	\bibliography{mybib}

\begin{thebibliography}{DKP25}

\bibitem[AL14]{AL14}
N.~Anantharaman and M.~L{\'e}autaud.
\newblock Sharp polynomial decay rates for the damped wave equation on the
  torus.
\newblock {\em Anal. PDE}, 7(1):159--214, 2014.
\newblock With an appendix by S. Nonnenmacher.

\bibitem[AM14]{AM14}
N.~Anantharaman and F.~Maci{\`a}.
\newblock Semiclassical measures for the schr{\"o}dinger equation on the torus.
\newblock {\em Journal of the European mathematical society}, 16(6):1253--1288,
  2014.

\bibitem[BH07]{BurqHitrik2007}
N.~Burq and M.~Hitrik.
\newblock Energy decay for damped wave equations on partially rectangular
  domains.
\newblock {\em Mathematical Research Letters}, 14(1):35--47, 2007.

\bibitem[BT10]{BorichevTomilov2010}
A.~Borichev and Y.~Tomilov.
\newblock Optimal polynomial decay of functions and operator semigroups.
\newblock {\em Mathematische Annalen}, 347(2):455--478, 2010.

\bibitem[BZ19]{BurqZworski2019}
N.~Burq and M.~Zworski.
\newblock {Rough controls for Schr\"odinger operators on tori}.
\newblock {\em Annales Henri Lebesgue}, 2:331--347, 2019.

\bibitem[DK20]{DatchevKleinhenz2020}
K.~Datchev and P.~Kleinhenz.
\newblock Sharp polynomial decay rates for the damped wave equation with
  {H}{\"o}lder-like damping.
\newblock {\em Proceedings of the American Mathematical Society},
  148(8):3417--3425, 2020.

\bibitem[DKP25]{DKP25}
K.~Datchev, P.~Kleinhenz, and A.~Prouff.
\newblock Geometry of wave damping on the torus.
\newblock {\em arXiv preprint arXiv:2509.05239}, 2025.

\bibitem[Kle19]{Kleinhenz2019}
P.~Kleinhenz.
\newblock {Stabilization Rates for the Damped Wave Equation with
  H\"older-Regular Damping}.
\newblock {\em Commun. Math. Phys.}, 369(3):1187--1205, 2019.

\bibitem[Kle25]{Kleinhenz2025}
P.~Kleinhenz.
\newblock Sharp energy decay rates for the damped wave equation on the torus
  via non-polynomial derivative bound conditions.
\newblock {\em arXiv preprint arXiv:2502.09745}, 2025.

\bibitem[KW26]{KleinhenzWang2026}
Perry Kleinhenz and Ruoyu~PT Wang.
\newblock Sharp polynomial decay for polynomially singular damping on the
  torus.
\newblock {\em Annals of PDE}, 12(1):6, 2026.

\bibitem[LR05]{LiuRao2005}
Z.~Liu and B.~Rao.
\newblock Characterization of polynomial decay rate for the solution of linear
  evolution equation.
\newblock {\em Zeitschrift f{\"u}r angewandte Mathematik und Physik ZAMP},
  56(4):630--644, 2005.

\bibitem[Sta17]{Stahn2017}
R.~Stahn.
\newblock Optimal decay rate for the wave equation on a square with constant
  damping on a strip.
\newblock {\em Zeitschrift f{\"u}r angewandte Mathematik und Physik}, 68(2):36,
  2017.

\bibitem[Sun23]{Sun23}
C.~Sun.
\newblock Sharp decay rate for the damped wave equation with convex-shaped
  damping.
\newblock {\em International Mathematics Research Notices}, 2023(7):5905--5973,
  2023.

\end{thebibliography}

\end{document}